\documentclass[reqno, 12pt]{amsart}
\makeatletter
\let\origsection=\section 
\def\section{\@ifstar{\origsection*}{\mysection}} 
\def\mysection{\@startsection{section}{1}\z@{.7\linespacing\@plus\linespacing}{.5\linespacing}{\normalfont\scshape\centering\S\hspace{1pt}}}
\makeatother        

\usepackage{amsmath,amssymb,amsthm}
\usepackage{mathrsfs} 
\usepackage{mathabx}\changenotsign
\usepackage{dsfont}

\usepackage{xcolor}
\usepackage[backref]{hyperref} 
\hypersetup{
	colorlinks,
	linkcolor={red!60!black},
	citecolor={green!60!black},
	urlcolor={blue!60!black}
}
\setlength{\parindent}{0pt}
\usepackage[open,openlevel=2,atend]{bookmark}

\usepackage[abbrev,msc-links,backrefs]{amsrefs}

\usepackage{doi}

\renewcommand{\PrintDOI}[1]{\doi{#1}}

\usepackage[T1]{fontenc}
\usepackage{lmodern}
\usepackage[babel]{microtype}
\usepackage[english]{babel}

\linespread{1.2}
\usepackage{geometry}
\geometry{left=27.5mm,right=27.5mm, top=25mm, bottom=25mm}

\numberwithin{equation}{section}
\numberwithin{figure}{section}

\usepackage{enumitem}

\def\RMlabel{\upshape(\Roman*)}

\def\Alabel{\upshape({\itshape \Alph*\,})}
\def\nlabel{\upshape({\itshape \arabic*\,})}
\renewcommand\labelenumi{(\roman{enumi})}
\renewcommand\theenumi\labelenumi

\theoremstyle{plain}
\newtheorem{thm}{Theorem}[section]

\newtheorem{prop}[thm]{Proposition}

\newtheorem{cor}[thm]{Corollary}
\newtheorem{lemma}[thm]{Lemma}

\theoremstyle{definition}

\newtheorem{definition}[thm]{Definition}

\newtheorem{conj}[thm]{Conjecture}

\let\eps=\varepsilon
\let\theta=\vartheta
\let\rho=\varrho
\let\phi=\varphi

\def\PP{\mathds P}
\def\EE{\mathds E}

\def\ind{\mathds 1}

\def\cB{{\mathcal B}}

\def\cH{{\mathcal H}}
\def\cK{{\mathcal K}}
\def\cL{{\mathcal L}}

\def\cR{{\mathcal R}}
\def\cS{{\mathcal S}}
\def\cT{{\mathcal T}}

\def\ccP{{\mathscr{P}}}

\DeclareMathOperator{\ex}{ex}

\DeclareMathOperator{\Ind}{ind}
\DeclareMathOperator{\Ext}{Ext}

\let\polishlcross=\l
\def\l{\ifmmode\ell\else\polishlcross\fi}

\makeatletter
\def\moverlay{\mathpalette\mov@rlay}
\def\mov@rlay#1#2{\leavevmode\vtop{   \baselineskip\z@skip \lineskiplimit-\maxdimen
		\ialign{\hfil$\m@th#1##$\hfil\cr#2\crcr}}}
\newcommand{\charfusion}[3][\mathord]{
	#1{\ifx#1\mathop\vphantom{#2}\fi\mathpalette\mov@rlay{#2\cr#3}}
	\ifx#1\mathop\expandafter\displaylimits\fi
}
\makeatother

\newcommand{\vrhup}[1]{\scaleobj{0.6}{\scalerel*{\rightharpoonup}{#1}}}
\newcommand{\nrhup}{\mathord{\scaleobj{0.6}{\scalerel*{\rightharpoonup}{x}}}}
\newcommand{\wrhup}{\scaleobj{0.6}{\scalerel*{\rightharpoonup}{W}}}
\def\vseq#1{\ThisStyle{  \mathord{\vbox{\offinterlineskip\ialign{    \hfil##\hfil\cr
					$\SavedStyle{}_{\smash{\vrhup#1}}$\cr
					\noalign{\kern-0.7\scriptspace}
					$\SavedStyle#1$\cr}}}}}
\def\seq#1{\ThisStyle{  \mathord{\vbox{\offinterlineskip\ialign{    \hfil##\hfil\cr
					$\SavedStyle{}_{\smash{\nrhup}}$\cr
					\noalign{\kern-0.5\scriptspace}
					$\SavedStyle#1$\cr}}}}}
\def\wseq#1{\ThisStyle{  \mathord{\vbox{\offinterlineskip\ialign{    \hfil##\hfil\cr
					$\SavedStyle{}_{\smash{\wrhup#1}}$\cr
					\noalign{\kern-0.7\scriptspace}
					$\SavedStyle#1$\cr}}}}}

\let\setminus=\smallsetminus
\let\emptyset=\varnothing

\let\epsilon=\varepsilon

\let\to=\lra

\makeatletter
\newcommand{\oset}[3][0ex]{\mathrel{\mathop{#3}\limits^{
			\vbox to#1{\kern-2.1\ex@
				\hbox{$\scriptstyle#2$}\vss}}}}
\makeatother

\DeclareMathSymbol{*}{\mathbin}{symbols}{"03}

\makeatletter
\newcommand{\pushright}[1]{\ifmeasuring@#1\else\omit\hfill$\displaystyle#1$\fi\ignorespaces}
\newcommand{\pushleft}[1]{\ifmeasuring@#1\else\omit$\displaystyle#1$\hfill\fi\ignorespaces}
\makeatother

\usepackage{tikz}
\usepackage{scalerel}
\newsavebox\eebox
\savebox\eebox{\tikz{
		\draw[black,fill=black] (90:1) circle (.35);
		\draw[black,fill=black] (210:1) circle (.35);
		\draw[black,fill=black] (330:1) circle (.35);
		\draw[black,line width=0.28cm ] (90:1) -- (330:1);
		\draw[black,line width=0.28cm ] (90:1) -- (210:1);
		\draw[opacity=0] (0:1.2) circle (0.1);
}}

\newsavebox\epbox
\savebox\epbox{\tikz{
		\draw[black,fill=black] (0.866,1) circle (.35);
		\draw[black,fill=black] (-0.866,1) circle (.35);
		\draw[black,fill=black] (210:1) circle (.35);
		\draw[black,fill=black] (330:1) circle (.35);
		\draw[black,line width=0.28cm ] (0.866,1) -- (330:1);
		\draw[black,line width=0.28cm ] (-0.866,1) -- (210:1);
		\draw[opacity=0] (0:1.2) circle (0.1);
}}

\let\Jn=J

\newcommand{\ovl}{\overline}

\newcommand\strongarrow{\mathrel{\overset{\makebox[0pt]{\mbox{\normalfont\tiny\sffamily ind}}}{\rightarrow}}}

\begin{document}
	\
	\title{Coloring triangles in graphs}
   \author{Ayush Basu}
\author{Vojt\v{e}ch R\"odl}
\address{Department of Mathematics, Emory University, 
    Atlanta, GA, USA}
\email{\{abasu|vrodl\}@emory.edu}
    \author{Marcelo Sales}
    \address{Department of Mathematics, University of California, Irvine, CA, USA}
\email{mtsales@uci.edu}

\thanks{The first and second authors were partially supported by NSF grants DMS 1764385 and 2300347. The third author was supported by US Air Force grant FA9550-23-1-0298.}

    \begin{abstract}
    We study quantitative aspects of the following fact: For every graph $F$, there exists a graph $G$ with the property that any $2$-coloring of the triangles of $G$ yields an induced copy of $F$, in which all triangles are monochromatic. We define the Ramsey number $R_{\Ind}^{\Delta}(F)$ as the smallest size of such a graph $G$. Although this fact has several proofs, all of them provide tower-type bounds. We study the number $R_{\Ind}^{\Delta}(F)$ for some particular classes of graphs $F$.
    \end{abstract}
 \maketitle

\section{Introduction}

For a natural number $N$, we set $[N] = \{1, \ldots, N\}$. Given a set $X$ and a positive integer $k$, we write $X^{(k)} = \{e \subseteq X :\: |e| = k\}$ for the set of all $k$-element subsets of $X$. A $k$-uniform hypergraph (or $k$-graph) is a pair $H = (V, E)$, where $V$ is the set of vertices and $E \subseteq V^{(k)}$ is the set of edges.
\\[0.3cm]
Given two $k$-graphs $F$ and $H$, we write $H \rightarrow F$ if every $2$-coloring of the edges of $H$ yields a monochromatic copy of $F$. The \emph{Ramsey number $R(F)$ of the $k$-graph $F$} is the smallest integer $N$ such that there exists a graph $H$ on $N$ vertices satisfying $H \rightarrow F$. The existence of $R(F)$ (Ramsey's theorem) was established by Ramsey \cite{R29} and later rediscovered by Erd\H{o}s and Szekeres \cite{ES35}.
\\[0.3cm]
Another possible direction is to study extensions of Ramsey's theorem when we impose that the monochromatic copy has additional structural properties. One of these extensions is the induced Ramsey theorem. We say that a $k$-graph $F$ is an \emph{induced} copy of $H$ if there exists an injective embedding $\phi: V(F) \rightarrow V(H)$ such that $\{\phi(x),\phi(y)\} \in H$ if and only if $\{x,y\} \in F$, i.e., edges are mapped to edges and non-edges to non-edges. For a $k$-graph $F$, the \emph{induced Ramsey number $R_{\Ind}(F)$} is the smallest integer $N$ for which there exists a $k$-graph $H$ such that every $2$-coloring of the edges of $H$ yields an induced monochromatic copy of $F$ (denoted by the relation $H \strongarrow F$).
\\[0.3cm]
The existence of $R_{\Ind}(F)$ was shown for graphs and hypergraphs independently by a series of authors \cites{D75, EHP75, rodlthesis73, AH78, NR77}. These original proofs provided no quantitative bounds on $R_{\Ind}(F)$. However, Erd\H{o}s \cite{E84} conjectured that for graphs, the number $R_{\Ind}(F)$ should be exponential in the number of vertices of $F$. The best current upper bound is due to Conlon, Fox, and Sudakov \cite{CFS12}, who showed that for every graph $F$ on $n$ vertices,
\begin{align}\label{eq:inducedgraphs}
    R_{\Ind}(F) \leq 2^{cn\log n},
\end{align}
improving a result from \cite{KPR98}. The problem for hypergraphs was studied in \cite{CDFRS17}, where in particular the authors proved that for a $3$-graph $F$ on $n$ vertices, the bound $R_{\Ind}(F)\leq 2^{2^{cn}}$ holds.
\\[0.3cm]
We are interested in a variant of the induced Ramsey theorem proved in \cites{Deuber75, NR75}. Given two graphs $G$ and $F$, we say that
\begin{align*}
    G \strongarrow (F)^{\Delta}
\end{align*}
if, for any $2$-coloring of the triangles in $G$, there exists an induced copy of $F$ such that all its triangles are monochromatic. Let $R_{\Ind}^{\Delta}(F)$ be the smallest integer $N$ such that there exists a graph $G$ on $N$ vertices with $G \strongarrow (F)^{\Delta}$. The existence of $R_{\Ind}^{\Delta}(F)$ was proven in \cites{NR75, Deuber75, NR82, BR16}, though none of those proofs provided a numerical bound.\\[0.3cm]
For a graph $F$, let $\cK_3(F)$ be the $3$-graph on the vertex set $V(F)$ whose edges are the triplets of vertices that form triangles in $F$. We remark that in order to show that $G \strongarrow (F)^{\Delta}$, it is not sufficient to find a $3$-graph $H^{(3)}$ with $H^{(3)} \strongarrow \cK_3(F)$. Indeed, we require the monochromatic copy $\cK$ of $\cK_3(F)$ to be not only induced in $H^{(3)}$, but also that having the property that for every pair of vertices of $V(\cK)$ the only edges containing them in $H^{(3)}$ are those in $\cK$. A copy $\cK$  with such property is called \emph{a strongly induced copy} of $\cK_3(F)$. Therefore, in order for $G$ to satisfy $G\strongarrow (F)^{\Delta}$, the hypergraph $H=\cK_3(G)$ needs to have a strongly induced monochromatic copy of $\cK_3(F)$. This additional requirement implies that $R_{\Ind}(F) \leq R_{\Ind}^{\Delta}(F)$.
\\[0.3cm]
The main contribution of the paper is to provide bounds for $R_{\Ind}^{\Delta}(F)$ for certain families of graphs. The first result is an upper bound when $F$ is a graph whose triangles form a linear $3$-graph. 


\begin{thm}\label{thm: main}
If $F$ is a graph on $n$ vertices such that $\cK_3(F)$ is linear, then
\begin{align*}
    R_{\Ind}^{\Delta}(F)\leq 2^{2^{2^{40n}}}.
\end{align*}
\end{thm}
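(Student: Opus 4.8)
The plan is to build $G$ in three layers, mirroring the three-fold exponential in the bound, so that each layer is responsible for absorbing one logarithm's worth of arbitrariness in the colorings. The heart of the argument is a partite construction / amalgamation scheme: since $\cK_3(F)$ is linear, any two triangles of $F$ share at most one vertex, and this is exactly the hypothesis that makes a controlled gluing possible without creating unwanted triangles. I would begin by fixing a large but explicit ground structure --- a blow-up of $F$ together with many ``pattern'' copies placed on a linearly ordered vertex set --- and then argue by a double induction: an outer induction on the copies of $F$ to be handled (a ``picture'' in the Nešetřil--Rödl sense), and an inner step that, given a graph already good for handling the first $i$ pictures, produces one good for $i+1$ by taking one copy of $F$ and attaching, along each of its triangles, a copy of the previously constructed graph. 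Linearity ensures these attached gadgets interact only along single vertices, so no new triangles are spuriously created and the triangle hypergraph of the amalgam is again linearly structured on the relevant part.

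The key steps, in order, are as follows. First, reduce $G \strongarrow (F)^\Delta$ to a statement about strongly induced monochromatic copies of $\cK_3(F)$ in $\cK_3(G)$, exactly as flagged in the discussion preceding the theorem; this is where I must be careful that ``induced in $F$'' plus ``all triangles monochromatic'' is faithfully captured, and where linearity is used to control which triples of $G$ are edges of $\cK_3(G)$. Second, set up the partite construction: choose a partition class structure indexed by $V(F)$, build the base ``picture'' as a single transversal copy of $F$ (more precisely a copy of the relevant blow-up), and describe the amalgamation over triangles. Third, run the inductive step: assuming a graph $G_i$ that forces a monochromatic strongly induced $\cK_3(F)$ under any coloring using a bounded palette of ``types,'' produce $G_{i+1}$ by replacing each triangle-slot of one copy of $F$ with a copy of $G_i$, then apply a Ramsey-type pigeonhole (ordinary hypergraph Ramsey, whence one of the exponentials) to find a sub-copy of $F$ on which the induced coloring is constant. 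Fourth, track the size: each of the three nested constructions multiplies the vertex count by an exponential factor in the previous size, and one checks that starting from something polynomial in $n$ and iterating three times stays below $2^{2^{2^{40n}}}$ --- the constant $40$ absorbs the blow-up factors, the Ramsey losses, and the number of triangles of $F$ (at most $\binom n3$, and by linearity at most $\binom n2 /3$, which is what lets the exponents stay linear rather than quadratic in $n$).

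I expect the main obstacle to be the inductive bookkeeping in the amalgamation, specifically ensuring that after gluing copies of $G_i$ along the triangles of a copy of $F$, the triangles of the resulting graph $G_{i+1}$ are \emph{precisely} the triangles inside the individual copies of $G_i$ plus the triangles of the distinguished copy of $F$, with no ``crossing'' triangles created by the gluing. This is exactly where the linearity of $\cK_3(F)$ (hence of the partite structure we maintain) is essential: if two triangles of $F$ shared an edge, a copy of $G_i$ glued onto one could send a triangle through the shared edge into the other, breaking the clean recursion; with single-vertex overlaps this cannot happen. The second delicate point is the colour-reduction in the inductive step --- passing from an arbitrary $2$-colouring of triangles to a bounded number of ``colour patterns on a copy of $F$'' so that ordinary Ramsey applies --- and making sure the palette size stays bounded independently of $i$, which is what keeps the recursion at three exponentials rather than more. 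Everything else (the final size computation, verifying $R_{\Ind}(F)\le R_{\Ind}^\Delta(F)$, the base case) is routine once the amalgamation lemma is in place.
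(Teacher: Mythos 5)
Your proposal takes a genuinely different route from the paper, but it has a gap that is not a matter of bookkeeping. You propose a Ne\v{s}et\v{r}il--R\"odl style partite amalgamation: an outer induction over ``pictures'' (copies of $F$), with each inductive step gluing copies of the previously constructed graph $G_i$ along the triangles of a distinguished copy of $F$, and a Ramsey/pigeonhole pass to collapse the colour patterns. The problem is that this scheme, as written, does not give three exponentials --- it gives a tower whose height grows with $F$. Each inductive step applies a Ramsey-type pigeonhole to a graph whose size already depends on the previous step, so each step exponentiates, and the number of steps you need is on the order of the number of triangles of $F$ (or, in the picture-counting variant, the number of copies of $F$ in the base picture, which is even larger). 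Keeping the \emph{palette} bounded, as you flag, does not help: the palette bound controls the branching factor at each step, not the \emph{number} of steps, and the number of steps is what drives the tower height. This is precisely the obstruction the paper names in the introduction --- the existing proofs \cite{NR75,Deuber75,NR82,BR16} all produce unbounded tower-type estimates --- so your proposal, without a further idea, reproduces that situation rather than improving on it.

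The paper sidesteps the iterated amalgamation entirely. It fixes a single pseudo-random graph $G \sim G(N,1/2)$ and proves three things about it: (i) by an Erd\H{o}s--Rado stepping-up argument, every vertex subset of a fixed large size $m$ already forces a monochromatic-triangle $K_n$, hence $G$ has $\Omega((N/m)^n)$ such cliques under any colouring; (ii) a density-increment lemma (the hypergraph analogue of Peng--R\"odl--Ruci\'nski) extracts sets $U_1,\dots,U_n$ of size $N^{0.9}$ on which the $n$-graph of blue cliques is weakly dense, and hence the $3$-graph of blue triangles is weakly dense; and (iii) an embedding lemma for \emph{linear} $3$-graphs finds an induced $F$ all of whose triangles lie in the blue triple system. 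The linearity of $\cK_3(F)$ enters at step (iii), where it guarantees that the candidate-extension set for the next triangle factors as a product $W_1\times W_2\times W_3$ (no two of the three new vertices are already constrained by a common earlier triangle), which is what makes the one-triangle-at-a-time counting go through. That role is quite different from the role you assign linearity --- preventing ``crossing'' triangles in an amalgam --- and, crucially, the paper's argument is \emph{not} iterative in a way that exponentiates at each stage: the three exponentials come from, respectively, the Erd\H{o}s--Rado clique size $m=2^{2^{4n}}$, the regularity parameter $\eps=2^{-2^{16n}}$ fed into the density-increment loss $d^{4\eps^{-n}\ln(1/\eps)}$, and the final $N$ large enough that $\eps N^{0.9}\ge\sqrt N$. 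If you want to salvage the amalgamation route, you would need to show that the number of gluing rounds (not the palette) is bounded by an absolute constant for $F$ with linear $\cK_3(F)$; nothing in the proposal does this, and I do not see a natural reason it should be true.
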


We remark that since $\cK_3(F)$ is the $3$-graph of triangles of $F$, then $\cK_3(F)$ is linear if and only if $\cK_3(F)$ has girth at least $4$. The second result concerns graphs where $\cK_3(F)$ is a family of bipartite $3$-graphs. Let $\cB_n \subseteq 2^{[n]^{(2)}}$ be the family of graphs $F$ on $n$ vertices with the property that there exists a bipartition $V(F)=A\cup B$ of the vertices such that $F[A]$ is a triangle free graph, $B$ is an independent set and the bipartite graph between $A$ and $B$ is arbitrary. In this case, we will show that the growth of $R_{\Ind}^{\Delta}(F)$ is at most doubly exponential.

\begin{thm}\label{thm:bipartite}
Let $F \in \cB_n$ be a graph on $n$ vertices. Then
\begin{align*}
    R_{\Ind}^{\Delta}(F)\leq 2^{2^{cn\log n}}
\end{align*}
for a positive constant $c$.
\end{thm}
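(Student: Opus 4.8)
The plan is to build the host graph $G$ in two stages, separating the "triangle-free part" $F[A]$ from the bipartite part between $A$ and $B$, and to exploit the fact that the triangles of $F$ all pass through $A$. First I would observe that since $F\in\cB_n$, every triangle of $F$ uses at least two vertices of $A$: because $B$ is independent and $F[A]$ is triangle-free, a triangle of $F$ has either two vertices in $A$ and one in $B$, or (impossibly) three in $A$. Hence $\cK_3(F)$ is a $3$-graph in which every edge meets the set $A$ in exactly two points; after forgetting the $B$-vertex, $\cK_3(F)$ projects onto the edge set of $F[A]$, which is triangle-free, so $\cK_3(F)$ is itself linear (two triangles share at most one vertex) — in fact $\cK_3(F)$ is a "bipartite" $3$-graph in the sense indicated in the excerpt. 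This structural fact is what makes a doubly-exponential bound plausible rather than the tower bound of the general case.

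\textbf{Main construction.} The strategy is to first apply the (graph) induced Ramsey theorem, in the quantitative form \eqref{eq:inducedgraphs} of Conlon--Fox--Sudakov, to a suitably chosen auxiliary graph that encodes the triangle-coloring problem, and then to "blow up" along the $B$-direction. Concretely, I would proceed as follows. Let $m=R_{\Ind}(F_0)$ where $F_0$ is an auxiliary graph on $O(n)$ vertices capturing $F[A]$ together with enough gadgetry to record, for each potential triangle through a fixed pair $\{x,y\}\subseteq A$, which "slot" in $B$ completes it; by \eqref{eq:inducedgraphs}, $m\le 2^{O(n\log n)}$. Then take $G_0$ with $G_0\strongarrow F_0$, and form $G$ from $G_0$ by replacing each vertex that plays the role of a $B$-vertex with an independent blow-up class of size roughly $m$ (and attaching it to the $A$-side vertices according to a random-like bipartite pattern of the kind used in the standard proof that $R_{\Ind}(F)$ exists for bipartite $F$). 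The number of vertices of $G$ is then at most $\mathrm{poly}(m)\cdot 2^{O(m)} = 2^{2^{O(n\log n)}}$, which yields the claimed bound $2^{2^{cn\log n}}$ after absorbing constants.

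\textbf{The core coloring argument.} Given a $2$-coloring $\chi$ of the triangles of $G$, I would argue in two steps. First, restrict attention to triangles with two vertices in the "$A$-part" of $G$ and one in the blown-up "$B$-part": for each ordered structure corresponding to an edge of $F[A]$, the coloring $\chi$ induces a $2$-coloring of the associated edges of the auxiliary graph $G_0$ (or a bounded product of such colorings, handled by iterating the arrow property with a correspondingly enlarged $G_0$ — this is why the triple-exponential is avoided, since we only need a bounded, $n$-dependent number of iterations, not a tower). Applying $G_0\strongarrow F_0$ produces an induced monochromatic copy of $F_0$, which pins down an induced copy of $F[A]$ in $G$ with all its "$A$-triangles data" monochromatic. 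Second, within the blow-up class attached to that copy, use a pigeonhole/Ramsey argument on the $\binom{|A'|}{1}$-many relevant pairs to select, for the already-chosen copy of $F[A]$, one representative in each $B$-class so that the bipartite pattern matches $F$ exactly and all completing triangles receive the same color as dictated by the monochromatic $F_0$. The key technical point is that the blow-up must be large enough ($\sim m$, hence the outer exponential) so that this pigeonhole always succeeds simultaneously over all $B$-vertices while preserving inducedness of the bipartite graph.

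\textbf{Main obstacle.} The hard part will be the bookkeeping in the core coloring argument: ensuring that "monochromatic in the auxiliary graph $F_0$" genuinely translates into "all triangles of the found induced copy of $F$ are monochromatic," given that a single pair $\{x,y\}\subseteq A$ may lie in many triangles of $F$ (completed by different $B$-vertices) and these must all end up the same color. Designing $F_0$ and the blow-up so that this consistency is forced — rather than merely making inducedness work — is the crux, and it is what determines whether one genuinely stays at the doubly-exponential level; getting the dependence of the number of arrow-iterations to be polynomial in $n$ (not exponential) is essential, and I expect this to require a careful encoding of the triangle-completion data into a single auxiliary graph of size $O(n)$ rather than iterating naively.
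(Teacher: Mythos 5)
Your high-level idea --- reduce to $F[A]$ via the Conlon--Fox--Sudakov bound \eqref{eq:inducedgraphs} and then handle $B$ by some blow-up --- is also where the paper begins, and you correctly identify the consistency of triangle colors through a fixed pair as the crux. However, the proposal contains a factual error and, more importantly, does not actually supply the mechanism that resolves the crux. The error: $\cK_3(F)$ is \emph{not} linear for $F\in\cB_n$. If $b_1,b_2\in B$ are both adjacent to both endpoints of an edge $\{a_1,a_2\}$ of $F[A]$, the triangles $\{a_1,a_2,b_1\}$ and $\{a_1,a_2,b_2\}$ share two vertices; projecting onto $F[A]$ only excludes triangles in $\cK_3(F)$, not large pairwise intersections. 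This is exactly why $\cB_n$ is not subsumed by Theorem~\ref{thm: main}, and it is the source of the obstacle you flag.

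The genuine gap is that the construction and the coloring argument are left as sketches precisely where the real work is, and what is sketched would not work. In the paper, one takes $H\strongarrow F[A]$ on $[m]$ (so $m\le 2^{cn\log n}$), blows $H$ up into classes $V_1,\dots,V_m$ of size $N=m^{k2^k}$, and adjoins an independent set $U=[N]^m$ of \emph{apex} vertices, the apex $(x_1,\dots,x_m)$ being adjacent to exactly the transversal $\{x_1,\dots,x_m\}$. Each apex $u_T$ turns the triangle coloring into a $2$-coloring $\phi_T$ of $G[T]\cong H$, so $H\strongarrow F[A]$ yields a monochromatic induced $F^T[A]$; a pigeonhole over the $2\binom{m}{k}$ choices of index set and color plus double-counting gives $\gg N^{k-1/2^{k-1}}$ distinct blue copies of $F[A]$ inside a fixed $k$-tuple of parts; the Erd\H{o}s bound (Theorem~\ref{thm:erdos64} with $t=2$) then produces a $2$-blow-up of $F[A]$ in which \emph{every} transversal copy is blue; and each $b_i$ is embedded as a \emph{distinct apex} $u_{X_i}$ over the transversal $X_i$ encoding $N(b_i)\cap A$, so all triangles at $b_i$ lie in the blue copy $X_i$ and monochromaticity is automatic. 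Your plan instead picks one representative per $B$-class after a pigeonhole, but in a generic blow-up the color of $\{x,y,z\}$ depends on $z$, and a single $z$ would have to simultaneously realize up to $\Theta(n^2)$ prescribed triangle colors across the chosen copy of $F[A]$; a class of size $2^{O(n\log n)}$ cannot guarantee this, and the auxiliary graph $F_0$ with its ``gadgetry'' is never specified. The remark about iterating the arrow property an ``$n$-dependent'' number of times also cannot rescue it: $n$ iterations of an exponential bound produce a tower of height $n$, not a double exponential, so at most $O(1)$ iterations are available. The apex-plus-K\"{o}vari--S\'{o}s--Tur\'{a}n mechanism is exactly what collapses the many color constraints into one application of $H\strongarrow F[A]$ per apex, and it is absent from your proposal.
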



Finally, our last result shows a polynomial bound if $\cK_3(F)$ is a tight tree. A $3$-graph $T$ is called a \emph{tight tree} if its edges can be ordered as $e_1,\ldots,e_t$ such that for each $i\geq 2$, $e_i$ has a vertex $v_i$ that does not belong to any previous edge and $e_i\setminus\{v_i\}$ is contained in $e_j$ for some $j<i$.

\begin{prop}\label{prop:tree}
    Let $F$ be a graph such that $\cK_3(F)$ is a tight tree. Then
    \begin{align*}
        R_{\Ind}^{\Delta}(F)\leq n^4
    \end{align*}
    holds.
\end{prop}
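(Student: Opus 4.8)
The plan is to strip $F$ down to its triangle hypergraph $T:=\cK_3(F)$ and then to build $G$ by a sequential construction along the construction order of the tight tree $T$, adding only polynomially many vertices at each of the at most $n$ steps.

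\textbf{Reduction.} Let $V(T)\subseteq V(F)$ be the set of vertices of $F$ lying on a triangle. The vertices of $F$ outside $V(T)$ and the edges of $F$ on no triangle impose no colouring constraint, and can be re-attached to the finished host by twin-vertex gadgets (a vertex whose neighbourhood $S$ is independent in $F$ becomes a constant number of private, pairwise non-adjacent clones joined to the image of $S$), which multiplies $|V(G)|$ only by an absolute constant. So one may assume $V(F)=V(T)$; then $F$ is obtained from a triangle by repeatedly adding a vertex joined to an edge that already lies on a triangle. Fix the tight-tree order $e_1,\dots,e_t$, and for $i\ge 2$ write $e_i=\{a_i,b_i,v_i\}$ with new vertex $v_i$ and attaching pair $\pi_i=\{a_i,b_i\}\subseteq e_{p(i)}$. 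Before starting I would record three facts: $e_t$ is a leaf of the underlying tree, so $F-v_t$ has triangle hypergraph $T-e_t$, again a tight tree; $N_F(v_i)=\pi_i\cup W_i$ with $W_i$ independent and anticomplete to $\pi_i$; and $v_i$ is non-adjacent to the third vertex $w_i$ of $e_{p(i)}$ (any of these failing would create a triangle through $v_i$ other than $e_i$).

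\textbf{Construction and colouring argument.} I would induct on $t$. Given a host $G^-$ with $G^-\strongarrow (F-v_t)^{\Delta}$, form $G$ by adjoining one fresh independent \emph{reservoir} $R$ of polynomially many vertices, put into the common neighbourhood of every realization of the pair $\pi_t$ so that each vertex of $R$ is simultaneously a legal image of $v_t$ and of the companion $w_t$, with all other adjacencies forced by $F$. The reservoir is what uncouples the colour of the new triangle $e_t$ from an already-decided colour: running the guarantee of $G^-$ produces a monochromatic induced copy of $F-v_t$ in which the image of $w_t$ is still free over $R$ within the link of the placed pair $\phi(\pi_t)$; as that link is $2$-coloured, some colour $\chi$ occurs on more than $t$ vertices of $R$, and choosing $\phi(w_t),\phi(v_t)$ to be two of them ties the colours of $e_{p(t)}$ and $e_t$ by pigeonhole. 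Propagating this up the underlying tree — the colour-tie across each tree edge $e_ie_{p(i)}$ coming from the reservoir shared by $\phi(v_i)$ and the third vertex through $\phi(\pi_i)$ — forces all $t$ triangles into the colour $\chi$ of $e_1$. Since the reservoirs are fresh, $|V(G)|$ grows additively, by $n^{O(1)}$ per step; a loose count of the reservoir sizes needed for the nested pigeonholes gives $|V(G)|\le n^4$.

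\textbf{Main obstacle.} The entire difficulty is forcing one common colour on all $t$ triangles while keeping $|V(G)|$ polynomial. A single global pigeonhole, as in the book graph, is unavailable once the underlying tree is deep, and a plain blow-up of $F$ fails because a two-dimensional adversary can colour the link of a shared pair so that its two triangles always disagree; hence the reservoir serving $v_i$ and the one serving its companion must genuinely coincide, and the statement carried through the induction must be strong enough to yield, for \emph{whichever} colour the adversary forces on that shared reservoir, a monochromatic copy of the smaller graph consuming the rest of the reservoir in that same colour. Arranging these strengthened hypotheses so that the pigeonhole at a tree edge never spoils the one at its parent, and verifying that the required reservoir sizes remain within $n^4$, is where the work concentrates; the purely adjacency-theoretic side — that a strongly induced monochromatic copy of $T$ in $\cK_3(G)$ genuinely yields an induced copy of $F$ — follows from the sparsity of $G$ together with the gadgets used in the reduction.
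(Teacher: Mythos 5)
Your proposal takes a genuinely different route from the paper, but it has a real gap that you yourself flag without resolving, and I do not think it closes. The crux is the claim that ``running the guarantee of $G^-$ produces a monochromatic induced copy of $F-v_t$ in which the image of $w_t$ is still free over $R$.'' A plain $G^-\strongarrow(F-v_t)^{\Delta}$ guarantee gives you one embedding with $\phi(w_t)$ pinned down; it does not come with any slack letting you relocate $\phi(w_t)$ into a reservoir after the colouring is revealed, and $w_t=v_{p(t)}$ may itself be in the attaching pair of other edges $e_j$, so re-choosing it can break the already-placed subtree. To make the argument run you would need a strengthened statement carried through the induction, essentially ``$G^-$ forces a monochromatic induced $F-v_t$ with $\phi(w_t)$ ranging over $>t$ vertices of a designated reservoir, all of the same colour in the relevant link,'' and then you must show the pigeonholes at distinct tree-edges (some sharing an apex vertex) compose without clashing. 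None of that is carried out. There is also an adjacency problem with the reservoir itself: you want each vertex of $R$ to be ``simultaneously a legal image of $v_t$ and of $w_t$,'' but $v_t$ and $w_t$ have different $F$-neighbourhoods in general, so wiring $R$ to serve both roles tends to create extra adjacencies and spoil inducedness. Finally, ``a loose count\dots gives $|V(G)|\le n^4$'' is not substantiated; $n^4$ is a sharp number and requires actual bookkeeping of reservoir sizes.

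For contrast, the paper avoids all of this by taking $G\sim G(N,p)$ with $N=n^4$ and $p=\Theta(1/n)$, and proving a deterministic embedding lemma. Given any $2$-colouring, one first passes to a \emph{minimal} subgraph $H\subseteq G$ still carrying at least the average fraction of majority-coloured triangles; minimality forces every edge of $H$ to lie on $\ge d/6$ majority triangles, where $d$ is the average triangle-degree. The tight tree $\cK_3(F)$ is then embedded greedily in tree order: at each step there are $\ge d/6$ candidate apices in the majority colour, and the sparsity of $G$ (combined with a union bound over sets $S$ of size $\le n$) guarantees that fewer than $0.01d$ candidates are excluded by the induced-copy constraints, so a valid choice always exists. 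This sidesteps the need to uncouple colours with reservoirs or to strengthen the induction: the sparse-random host plus the minimum-degree cleaning makes every step of the greedy embedding succeed simultaneously in the same colour. If you want to rescue your constructive approach, the first thing to nail down is the exact form of the strengthened induction hypothesis; as written, the step from $G^-$ to $G$ does not go through.
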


The paper is organized as follows. In Section \ref{sec: properties}, we provide a brief sketch of the proof of Theorem \ref{thm: main} and introduce some notation. Section \ref{sec:embedding} is devoted to prove a version of Ramsey theorem for $2$-colorings of triangles in the random graph, while Section \ref{sec:main} contains the proof of Theorem \ref{thm: main}. In Section \ref{sec:bipartite} and \ref{sec:tree}, we prove Theorem \ref{thm:bipartite} and Proposition \ref{prop:tree}. 


\section{Proof Outline and Notation}
\label{sec: properties}

\subsection{Brief Sketch of Proof of Theorem \ref{thm: main}} Given a graph $F$ such that $\cK_3(F)$ is linear, let,
\begin{align*}
   |V(F)| = n, \quad \text{ and } \quad N = 2^{2^{2^{40n}}}. 
\end{align*}
Our plan is to show that there exists an instance $G$ of $G(N,1/2)$ such that $G\strongarrow (F)^{\Delta}_2$. Here we divide it into the following three steps. 
\begin{enumerate}[label = \RMlabel]
    \item\label{I} In the first step (see Lemma \ref{lem: RamseyGraph}), we show that there is an instance $G$ of $G(N,1/2)$ satisfying the property that for every 2-coloring of $\cK_3(G)$, yields ``many'' monochromatic (say blue) copies of $K_n$. Further, $G$ will also satisfy that every pair of ``large enough'' subsets are $\eps$-regular for a suitable choice of $\eps= \eps(n)$. 
    \item\label{II} In the second step, we consider the $n$-uniform hypergraph $H_n$ obtained by taking the $n$ sets that are the vertices of blue cliques in $G$ as edges, and ``weakly regularise'' (Definition \ref{def: dense}) it (using Lemma \ref{lem: pengrucinski}). We obtain pairwise disjoint vertex sets $U_1,\dots,U_n$ in $V(G)$, such that the crossing edges of $H_n$ with respect to $U_1,\dots, U_n$ are ``weakly regular''. Using this property, we then show that the 3-uniform $n$-partite hypergraph $BT^{(3)}(H_n)$ formed by the blue triangles in the crossing cliques of $H_n$ is also ``weakly regular''.
    \item\label{III} Finally, we use an embedding lemma for embedding linear hypergraphs in weakly regular partitions (see Lemma \ref{lem: embedding}), to find an induced copy of $F$ in $G$ such that all the triangles of $F$ are in $BT^{(3)}(H_n)$. 
\end{enumerate}
In the next subsection, we state briefly state the notation used in the rest of the paper for completeness. 
\subsection{Notation}
Given a graph $G$, a subset $U\subseteq V(G)$, let $e_G(U)$ denote the number of edges in the induced subgraph $G[U]$ and let $d_G(U) = e_G(U)/\binom{|U|}{2}$. Given a $r$-uniform hypergraph $H$ and pairwise disjoint subsets of vertices $V_1,\dots, V_\ell$, let $H[V_1,\dots, V_\ell]$ denote the $\ell$-partite $r$-uniform hypergraph where an edge $e$ of $H$ is in $H[V_1,\dots, V_r]$ if and only if $|e\cap V_\ell|\leq 1$ for every $i\in [\ell]$. Further, for $r=\ell$, let $e_H(V_1,\dots, V_r)$ denote the number of edges in $H[V_1,\dots, V_r]$ and 
$$ d_H(V_1,\dots, V_r) = \frac{e_H(V_1,\dots, V_r)}{|V_1|\cdots |V_r|}.$$
If the underlying graph being considered is clear, we will drop the subscripts in $e_H$, $d_H$, etc.
\section{Embedding Many Monochromatic Cliques in Random Graphs}\label{sec:embedding}
In this section we proceed with the first step (as described in \ref{I}), in the proof of Theorem \ref{thm: main}. We prove Lemma \ref{lem: RamseyGraph}. We fix the following parameters. Given an integer $n$ let
\begin{align*}
        N(n) = N = 2^{2^{2^{40n}}},\quad \text{and} \quad m(n) = m = 2^{2^{4n}}.
\end{align*}
To show that an instance of the random graph $G(N,1/2)$ contains ``many cliques'', we will first show that the random graph $G(m,1/2)\strongarrow (K_n)^{\Delta}_2$ with probability at least 3/4 (see Lemma \ref{lem: PropPRamsey} and Corollary \ref{cor: GnpRamsey}), and thus on average, many subsets of vertices of size $m$ in $G(N,1/2)$ will contain a monochromatic copy of $K_n$. Before we proceed, we introduce a property of the random graph that we will use. 
\subsection{Properties of Random Graphs} 
The following property of the random graph says that all sufficiently large subsets have density roughly $1/2$. Given sets of vertices $X$ and $Y$ and a graph $G$, let
\begin{align*}
    P(X,Y):= |\{\{x,y\}: x\neq y, x\in X, y\in Y\}|, \quad e(X,Y):=  |\{\{x,y\}\in E(G): x\in X, y\in Y\}|.
\end{align*}
\begin{definition}
    \label{propertyP}
    We say a graph $G$ satisfies property $\ccP$ if for every pair of subsets $(X,Y)$ such that $|X|\geq \sqrt{t}$ and $|Y|\geq \sqrt{t}$,
    \begin{align*}
        \left|\frac{e(X,Y)}{P(X,Y)}-\frac{1}{2}\right| < t^{-1/8}. 
    \end{align*}
\end{definition}
We give a proof of the following Lemma in the appendix for completeness. 
\begin{lemma}
\label{lem: Gnpisregular}
    For $t\geq 2^{64}$, 
    \begin{align*}
        \PP(G(t,1/2) \text{ satisfies } \ccP) \geq 3/4. 
    \end{align*}
\end{lemma}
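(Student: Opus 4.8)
\textbf{Proof proposal for Lemma~\ref{lem: Gnpisregular}.}

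The plan is to prove the statement by a standard first-moment (union bound) argument over all ``large'' pairs $(X,Y)$, combined with a Chernoff-type concentration bound for each fixed pair. First I would fix disjoint-or-overlapping sets $X, Y \subseteq [t]$ with $|X| = a \geq \sqrt t$ and $|Y| = b \geq \sqrt t$. Writing $P(X,Y)$ for the number of pairs $\{x,y\}$ with $x \in X$, $y \in Y$, $x \neq y$, the quantity $e(X,Y)$ is a sum of $P(X,Y)$ independent indicator variables (the presence of each such edge in $G(t,1/2)$), each with mean $1/2$; hence $\EE[e(X,Y)] = P(X,Y)/2$ and $P(X,Y) \geq \binom{\min(a,b)}{2} \gtrsim t/2 - O(\sqrt t)$, so in particular $P(X,Y) \geq \sqrt t$ for $t$ large. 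By the Chernoff bound,
\begin{align*}
    \PP\!\left(\left|\frac{e(X,Y)}{P(X,Y)} - \frac12\right| \geq t^{-1/8}\right) \leq 2\exp\!\bigl(-c\, t^{-1/4} P(X,Y)\bigr) \leq 2\exp\!\bigl(-c'\, t^{3/4}\bigr)
\end{align*}
for absolute constants $c, c' > 0$, using $P(X,Y) \geq c'' t$.

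Next I would take the union bound over all choices of $(X,Y)$. The number of such pairs is at most $(2^t)^2 = 4^t = 2^{2t}$, so
\begin{align*}
    \PP(G(t,1/2) \text{ fails } \ccP) \leq 2^{2t} \cdot 2\exp\!\bigl(-c' t^{3/4}\bigr).
\end{align*}
This bound does not tend to zero, because $2^{2t}$ grows faster than $\exp(c' t^{3/4})$ decays --- so the naive union bound is insufficient, and this is the main obstacle. The standard fix is to \emph{not} union over all subsets, but only over subsets of size exactly $\lceil \sqrt t\rceil$: a monotonicity/averaging argument shows that if the density estimate holds for all pairs of sets of size exactly $\lceil\sqrt t\rceil$ (with a slightly sharper error term), then it holds for all larger pairs as well, since $e(X,Y)/P(X,Y)$ for larger $X \supseteq X_0$, $Y \supseteq Y_0$ is an average of the corresponding ratios over $\lceil\sqrt t\rceil$-subsets $X_0 \subseteq X$, $Y_0 \subseteq Y$ (modulo lower-order corrections from the diagonal $x = y$). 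With $|X| = |Y| = s := \lceil\sqrt t\rceil$, the number of pairs is at most $\binom{t}{s}^2 \leq t^{2s} = 2^{2s\log_2 t} = 2^{O(\sqrt t \log t)}$, while for each such pair $P(X,Y) = \binom{s}{2} \asymp t/2$ and the Chernoff bound gives failure probability at most $2\exp(-c' t^{1 - 1/4}) = 2\exp(-c' t^{3/4})$ — wait, here we must be slightly careful, since with $|X|=|Y|=s$ we get $P(X,Y) \asymp t$, hence failure probability $2\exp(-\Theta(t^{-1/4} \cdot t)) = 2\exp(-\Theta(t^{3/4}))$, and $2^{O(\sqrt t \log t)} \cdot \exp(-\Theta(t^{3/4})) \to 0$ since $t^{3/4}$ dominates $\sqrt t \log t$. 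Choosing the threshold $t \geq 2^{64}$ makes the resulting probability at most $1/4$ after tracking the constants; the exponent $1/8$ in the error term $t^{-1/8}$ and the exponent $1/2$ in the size threshold $\sqrt t$ are precisely calibrated so that the Chernoff exponent $t^{3/4}$ beats the entropy term $\sqrt t \log t$ with room to spare.

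I expect the routine part to be the bookkeeping: propagating the density estimate from the threshold size $\lceil\sqrt t\rceil$ up to arbitrary larger sets while controlling the diagonal terms $\{x,y\}$ with $x = y$ (which contribute $O(\min(|X|,|Y|))$ to $P$ and are negligible against $P \asymp |X||Y| \geq t$), and checking that the constants work out at the explicit threshold $t = 2^{64}$. The one genuine idea needed is the reduction to sets of a fixed small size, which is what makes the union bound converge; everything else is a direct Chernoff estimate. Since the excerpt defers this proof to an appendix and flags it as included ``for completeness,'' a clean writeup along these lines is all that is required.
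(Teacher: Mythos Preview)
Your proposal is correct and matches the paper's appendix proof essentially line for line: reduce to pairs $(X,Y)$ of size exactly $\lceil\sqrt t\rceil$, apply Chernoff to get failure probability $\exp(-\Theta(t^{3/4}))$ per pair, take a union bound over the $\binom{t}{\sqrt t}^2 = \exp(O(\sqrt t\log t))$ such pairs, and then extend to all larger pairs by the averaging/monotonicity argument you describe. The paper's writeup is in fact slightly sketchier than yours (it restricts the base case to disjoint pairs and leaves the diagonal correction implicit), so your version is if anything more careful.
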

We observe that $e(X,Y)/P(X,Y)=d(X,Y)$ for a disjoint pair of vertex subsets $X,Y$ and for $X=Y$, we have $e(X,X)/P(X,X)=d(X)$. Thus, if a graph $G$ satisfies property $\ccP$, then for every pair of disjoint subsets $(X,Y)$ such that $|X|\geq \sqrt{t}$ and $|Y|\geq \sqrt{t}$,
    \begin{align}
    \label{eqn: epsregGNP2}
        \left|d(X,Y) - \frac{1}{2}\right|< t^{-1/8}. 
    \end{align}
     Further for every $X\subseteq V(G)$ such that $|X|\geq \sqrt{t}$,
    \begin{align}
    \label{eqn: epsregGNP1}
        \left|d(X)-\frac{1}{2}\right| < t^{-1/8}. 
    \end{align}

\subsection{Ramsey Property of $G(m,1/2)$}
Here we will show that any graph $H$ satisfying $\ccP$ has the property that $H\strongarrow (K_n)^{\Delta}_2$, and consequently the random graph $G(m,1/2)\strongarrow (K_n)^{\Delta}_2$ with high probability.
\begin{lemma}
\label{lem: PropPRamsey}
    For every integer $n\geq 1$, if $H$ is a graph on $m=2^{2^{4n}}$ vertices satisfying property $\ccP$, then,
    \begin{align*}
        H \to (K_n)^{\Delta}_2. 
    \end{align*}
\end{lemma}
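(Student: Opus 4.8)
The plan is to iterate a Ramsey-type dichotomy inside $H$, using property $\ccP$ to guarantee that each step can be carried out as long as the current vertex set is large enough, and to track how the set sizes shrink. Fix a $2$-coloring $\chi$ of the triangles of $H$. The key observation is that property $\ccP$ ensures any subset $W$ with $|W|\geq\sqrt m$ has edge density close to $1/2$, hence spans roughly $\tfrac12\binom{|W|}{2}$ edges, so in particular it contains many triangles. We want to build, vertex by vertex, a set of $n$ vertices inducing a $K_n$ all of whose triangles get the same color. I would proceed greedily: maintaining a current ``candidate set'' $W_i$ (the vertices still adjacent to everything picked so far, within which we still have freedom) together with a chosen clique $\{v_1,\dots,v_i\}$ so that all triangles inside $\{v_1,\dots,v_i\}$ are, say, blue, and moreover every triangle with two vertices in $\{v_1,\dots,v_i\}$ and one in $W_i$ is also blue. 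At step $i+1$ we pick any $v_{i+1}\in W_i$; its blue-neighborhood structure inside $W_i$ — i.e. the set of $w\in W_i$ with all pairs $\{v_j,v_{i+1},w\}$ blue — must be handled by a further regularization/pigeonhole argument, and this is where the doubly-exponential size budget is spent.

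More precisely, the mechanism I expect the authors to use is the following. Given the current set $W_i$ of size $m_i$, consider for a fixed new vertex $v$ the ``link coloring'': color each edge $\{x,y\}$ inside $W_i$ by the color $\chi(\{v,x,y\})$. This is a $2$-coloring of (most of) the pairs of $W_i$; but we cannot directly apply graph Ramsey because we need the monochromatic set to still be large and to still have density $1/2$. Instead one uses property $\ccP$ together with a counting/averaging step: either a large subset of $W_i$ has most of its internal pairs colored blue by $v$'s link, or a large subset has most colored red; passing to that subset and restricting to pairs that really are blue (resp. red) in $v$'s link keeps a set of size roughly $m_i^{1/2}$ (the loss governed by the $t^{-1/8}$ error term and by discarding the minority-colored pairs). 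One must be slightly careful that after restricting to edges of one color in the link of $v$, the remaining graph is still dense enough that property $\ccP$ applies to its large subsets — this is exactly what the quantitative form of $\ccP$ buys us. Iterating this $n$ times, the set size goes roughly $m\mapsto m^{1/2}\mapsto m^{1/4}\mapsto\cdots$, and since $m = 2^{2^{4n}}$ we have $m^{2^{-n}} = 2^{2^{3n}}\gg n$, so after $n$ rounds we still have more than $n$ vertices available; picking $n$ of them gives the desired monochromatic induced $K_n$. (A parity/pigeonhole step fixes the common color of the $n$ link-choices: at each stage $v$'s link is predominantly one color, and by pigeonhole at least $n$ of the $n\cdot$(something) stages agree, or — more likely in the actual argument — one simply runs the recursion choosing the majority color at each step and then applies pigeonhole once at the end among the $\leq$ a bounded number of color-patterns.)

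The main obstacle, and the place requiring genuine care rather than bookkeeping, is the interaction between \emph{induced}/monochromatic requirements and the \emph{regularity-preservation} needed to keep recursing: after we restrict $W_i$ to the set of common blue-neighbors (in the triangle sense) of $v_1,\dots,v_i$, we are no longer looking at an arbitrary large subset of $H$ but at a subset defined by the coloring, and we need it to still satisfy a usable density bound. Property $\ccP$ is stated for \emph{all} pairs of large subsets of $H$, so it does apply to whatever subset we land in, provided that subset is still of size $\geq\sqrt m$; hence the crux is purely the size accounting — showing that the minority-color pairs and the $t^{-1/8}$ slack together remove at most a square-root's worth of vertices per round, so that $n$ rounds starting from $m=2^{2^{4n}}$ terminate with $\geq n$ vertices. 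I would set up the recursion with an explicit sequence $m_0=m\geq m_1\geq\cdots\geq m_n$ satisfying $m_{i+1}\geq \sqrt{m_i}/2$ (say), verify $m_i\geq m^{2^{-i}}/2^i$ by induction, and check $m^{2^{-n}} = 2^{2^{4n}\cdot 2^{-n}}=2^{2^{3n}}\geq 2^n n$, which closes the argument.
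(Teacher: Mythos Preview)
Your proposal has a genuine gap: the greedy invariant you try to maintain cannot be maintained, and the mechanism you describe does not repair it. You want $W_i$ to consist of vertices $w$ such that every triangle $\{v_j,v_k,w\}$ with $j<k\le i$ is blue. But when you pick $v_{i+1}\in W_i$ and try to pass to $W_{i+1}$, you need $\chi(\{v_j,v_{i+1},w\})=\text{blue}$ for all $j\le i$ and all $w\in W_{i+1}$. Nothing prevents the adversary from colouring every such triangle red, making the ``all blue'' class empty. Your fallback of ``choosing the majority colour at each step'' does not help either: the colour pattern $(\chi(\{v_1,v_{i+1},w\}),\dots,\chi(\{v_i,v_{i+1},w\}))$ is a vector in $\{\text{red},\text{blue}\}^i$, so pigeonhole gives a $2^{-i}$ fraction, not a single majority bit, and the patterns at different steps need not be compatible with one another. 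Moreover, the ``link colouring'' you introduce --- colouring pairs $\{x,y\}\subseteq W_i$ by $\chi(\{v_{i+1},x,y\})$ --- controls triangles with \emph{one} vertex in the clique and two in $W_i$, which is the wrong type of triangle for your invariant; and the $m_i\mapsto\sqrt{m_i}$ shrinkage you posit is not produced by any step you describe.

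What the paper actually does is the classical Erd\H{o}s--Rado stepping-up argument. One does \emph{not} fix a target colour. Instead one builds a clique $x_1,\dots,x_\ell$ of length $\ell=R_2(n-1)+1<4^n$ with the canonical property that $\chi(\{x_i,x_j,x_k\})$ depends only on the pair $\{i,j\}$ of the two smallest indices. At step $r$ one picks $x_{r+1}$ of high degree in the current pool $S_r$ (property~$\ccP$ guarantees such a vertex once $|S_r|\ge\sqrt m$), restricts to its neighbourhood, and then partitions the remaining vertices by the full pattern $\bigl(\chi(\{x_1,x_{r+1},w\}),\dots,\chi(\{x_r,x_{r+1},w\})\bigr)\in\{\text{red},\text{blue}\}^r$, keeping the largest of the $2^r$ classes. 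This gives $|S_{r+1}|\ge 2^{-(r+2)}|S_r|$, hence $|S_\ell|\ge 2^{-\ell^2}m$, and since $\ell<4^n$ and $m=2^{2^{4n}}$ one has $|S_\ell|\ge\sqrt m$ throughout. Finally, one colours the edges $\{x_i,x_j\}$ of the resulting $\ell$-clique by the common value $\chi(\{x_i,x_j,x_k\})$ and applies ordinary graph Ramsey to extract a monochromatic $K_{n-1}$; together with $x_\ell$ this is the desired $K_n$. The essential idea you are missing is this reduction to a $2$-colouring of \emph{pairs} via canonicalisation, rather than attempting to build the monochromatic clique directly.
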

 The proof of the lemma follows the approach of Erd\H{o}s and Rado used to prove upper bounds on hypergraph Ramsey numbers in 
 \cite{ER52}.     
\begin{proof}
    Let $H$ be a graph satisfying $\ccP$ on $m= 2^{2^{4n}}$ vertices.  Let $\chi:\cK_3(H)\to \{\text{red}, \text{blue}\}$ be a two coloring of the triangles of $H$. Let 
    \begin{align*}
    \ell = R_2(n-1)+1 < 4^{n}, \quad m = 2^{2^{4n}} \text{ and thus, } \ell\leq \sqrt{\frac{1}{2}\log m}.
    \end{align*} 
    Before we begin, we note that every graph contains a vertex whose degree is at least the average degree. Given a vertex $v\in V(H)$ and $U\subseteq V(H)$, let $d_U(v):= |N(v)\cap U|$. Since $H$ satisfies $\ccP$ (thus $t=m$ in (\ref{eqn: epsregGNP1})), it satisfies the following property:  
     \\[0.3cm]
     For every $X\subseteq V(H)$ such that $|X|\geq \sqrt{m}$, there exists a $v\in X$ such that, 
    \begin{align}
    \label{eqn: avgdeg}
        d_X(v)\geq \left(\frac{1}{2} - m^{-1/8} \right)(|X|-1)= \left(\frac{1}{2} - m^{-1/8} \right)|X| \geq \frac{1}{4}|X|.
    \end{align}
     We will now choose the vertices of a clique $x_1,\dots, x_\ell$ in $V(H)$ with the property that the color of the triangle $\{x_i,x_j,x_k\}$ is determined by the first pair. In other words, for all $1\leq i<j\leq \ell$, the value of $\chi(\{x_i,x_j,x_k\})$ is the same for every $x_k$ with $k>j$. We choose such a clique inductively. \\[0.3cm] 
    In view of (\ref{eqn: avgdeg}), there exists $v\in V(H)$ such that $d(v)\geq m/4$. Let $x_1 = v$ be the first vertex and let $S_1 = N(x_1)$ be the set from which subsequent vertices are chosen. We now describe the inductive step. After $r<\ell$ steps, let us assume we have selected a clique $\{x_1,\dots, x_r\}\subseteq V(H)$ and a set $S_r\subseteq V(H)$ from where $\{x_{r+1},\dots, x_\ell\}$ will be chosen, with the following properties:
        \begin{enumerate}
            \item\label{ER1} $S_r\subseteq N(x_1)\cap \cdots \cap N(x_r)$ and  $|S_r|$ is at least $\left(\frac{1}{2}\right)^{r^2}m$.
            \item\label{ER2} For every $1\leq i<j\leq r$, $\chi(\{x_i, x_j, v\})$ takes the same value for all $v$ in $\{x_{j+1},\dots, x_r\}\cup S_r$. 
           
        \end{enumerate}
    Since $r<\ell < \sqrt{\frac{1}{2}\log m}$,
    \begin{align*}
        |S_r| \geq \left(\frac{1}{2}\right)^{r^2}m \geq \left(\frac{1}{2}\right)^{t^2} m \geq m^{1/2}. 
    \end{align*}
    We will now choose $x_{r+1}$ and $S_{r+1}$. In view of (\ref{eqn: avgdeg}), there exists a vertex $v\in S_r$ such that $d_{S_r}(v)\geq |S_r|/4.$ Let $x_{r+1} = v$ and $S'_{r} = N(x_{r+1})\cap S_r$. For every vertex $v$ in $S'_r$, there are $2^r$ possibilities for the sequence of colors,
    \begin{align*}
        c(v) := (\chi(\{x_1, x_{r+1}, v\}), \chi(\{x_2, x_{r+1}, v\}), \cdots, \chi(\{x_r, x_{r+1}, v\}))\in \{\text{red}, \text{blue}\}^r.
    \end{align*}
    Thus there exists a subset of $S$ of  $S'_r$ with at least $|S_r'|/2^r$ elements such that for every $v\in S$, the sequence of colors $c(v)$ takes the same value $(c_1,\dots, c_r)$. Let $S_{r+1}=S$. In view of \ref{ER1}, we have,
     $$S_{r+1} \subseteq S_r\cap N(x_{r+1})\subseteq N(x_1)\dots \cap N(x_{r+1}),$$
     and since $|S_r'|\geq |S_r|/4$ and $|S_r|\geq \left(\frac{1}{2}\right)^{r^2}m$,
    \begin{align*}
        |S_{r+1}|\geq \left(\frac{1}{2}\right)^r |S_r'|\geq \left(\frac{1}{2}\right)^r \frac{|S_r|}{4}\geq   \left(\frac{1}{2}\right)^r \left(\frac{1}{4}\right)\left(\frac{1}{2}\right)^{r^2}m\geq \left(\frac{1}{2}\right)^{(r+1)^2}m.
    \end{align*}
    Further, by our choice of $S_{r+1}$, for every $1\leq i\leq r$ and $v\in S$, the color of the triangle $\{x_i, x_{r+1}, v\}$ is $c_i$. Consequently, in view of \ref{ER2}, for every $1\leq i<j\leq r+1$, $\chi(\{x_i, x_j, v\})$ takes the same value for all $v$ in $\{x_{j+1},\dots, x_{r+1}\}\cup S_{r+1}$. 
    \\[0.3cm]
    Thus we have a clique $\{x_1,\dots, x_\ell\}$ in $V(H)$ with the property that the color of the triangle $\{x_i,x_j,x_k\}$ is determined by the first pair. We color the pairs of the clique $\{x_1,\dots, x_{t-1}\}$ with the colors $\{\text{red}, \text{blue}\}$. For all $1\leq i<j\leq \ell-1$, the pair $\{x_i, x_j\}$ receives the color $\chi(\{x_i, x_j, x_{j+1}\})$, i.e., the color of the triangles ``looking ahead''. Since $K_{\ell-1}\to (K_{n-1})_2$, there exists a clique $K$ on $n-1$ vertices in the clique $\{x_1,\dots, x_{\ell-1}\}$ such that $K\cup \{x_\ell\}$ forms a $K_n$, all of whose triangles are monochromatic. 
\end{proof}
In view of Lemma \ref{lem: Gnpisregular}, we have the following corollary.
\begin{cor}
\label{cor: GnpRamsey}
     For every integer $n\geq 2$ and $m\geq 2^{2^{4n}}$, 
    \begin{align*}
        \PP(G(m,1/2) \to (K_n)^{\Delta}_2) \geq 3/4.
    \end{align*}
\end{cor}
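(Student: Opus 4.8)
The plan is to deduce the corollary directly from the two preceding lemmas, using a trivial monotonicity observation to bridge the gap between the exact size $2^{2^{4n}}$ appearing in Lemma \ref{lem: PropPRamsey} and the range $m \geq 2^{2^{4n}}$ in the statement. Write $m_0 := 2^{2^{4n}}$. The first step is a numerical check that Lemma \ref{lem: Gnpisregular} applies at $t = m_0$: since $n \geq 2$ we have $4n \geq 8$, so $m_0 = 2^{2^{4n}} \geq 2^{2^{8}} = 2^{256} \geq 2^{64}$, which is precisely where the hypothesis $n \geq 2$ is needed.

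Next, fix $m \geq m_0$, let $G$ be an instance of $G(m,1/2)$, and fix any deterministic subset $W \subseteq V(G)$ with $|W| = m_0$. Then $G[W]$ is an instance of $G(m_0,1/2)$, so by Lemma \ref{lem: Gnpisregular} (with $t = m_0$) the event that $G[W]$ satisfies property $\ccP$ has probability at least $3/4$. On that event, Lemma \ref{lem: PropPRamsey}, whose hypothesis is exactly a graph on $2^{2^{4n}} = m_0$ vertices satisfying $\ccP$, yields $G[W] \to (K_n)^{\Delta}_2$.

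Finally, I would invoke the obvious fact that $\to (K_n)^{\Delta}_2$ passes to supergraphs: given any $2$-coloring $\chi$ of $\cK_3(G)$, restrict it to $\cK_3(G[W])$; by $G[W] \to (K_n)^{\Delta}_2$ there is a set $S \subseteq W$ of $n$ vertices spanning a clique in $G[W]$ with every triple $\chi$-monochromatic, and since $K_n$ is complete, $S$ spans a $K_n$ in $G$ with the same property. Hence $G[W] \to (K_n)^{\Delta}_2$ forces $G \to (K_n)^{\Delta}_2$, and therefore $\PP\bl G(m,1/2) \to (K_n)^{\Delta}_2 \br \geq \PP\bl G[W] \text{ satisfies } \ccP \br \geq 3/4$.

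I do not anticipate any real obstacle: the argument is just an assembly of Lemmas \ref{lem: Gnpisregular} and \ref{lem: PropPRamsey} together with supergraph monotonicity of the triangle-colouring arrow. The only care needed is the size bookkeeping --- confirming $m_0 \geq 2^{64}$ and that restricting to a deterministic $m_0$-subset both reproduces the law $G(m_0,1/2)$ and can only help the arrow relation. Alternatively, one could avoid passing to a subgraph by observing that the proof of Lemma \ref{lem: PropPRamsey} goes through verbatim for every $m \geq 2^{2^{4n}}$, since enlarging $m$ only relaxes each inequality used there; applying Lemma \ref{lem: Gnpisregular} with $t = m$ then gives the conclusion at once.
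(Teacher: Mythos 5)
Your proposal is correct and follows essentially the same route as the paper, which simply combines Lemma~\ref{lem: Gnpisregular} and Lemma~\ref{lem: PropPRamsey}; you supply the details the paper leaves tacit (the check $2^{2^{4n}}\geq 2^{64}$ for $n\geq 2$, and handling $m>2^{2^{4n}}$ via either restriction to an $m_0$-subset and supergraph monotonicity of $\to(K_n)^\Delta_2$, or by noting that enlarging $m$ only relaxes the inequalities in the proof of Lemma~\ref{lem: PropPRamsey}). Both of your bridging arguments are sound.
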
 
\subsection{$G(N,1/2)$ has many monochromatic cliques} Now we are ready to prove Lemma \ref{lem: RamseyGraph}. We remind the reader that given a positive integer $n$, we have,
\begin{align}
\label{eqn: ParametersRamseyGrahp}
        N(n) = N = 2^{2^{2^{40n}}}, \quad m(n) = m = 2^{2^{4n}}, \text{ and further let } \eps(n) = \eps =   \frac{1}{2^{2^{16n}}}.
\end{align}
Further, we will need the following definition. 
\begin{definition}[$(\eps,d)$-regular pair]
    \label{def: regularity}
    Given $0 < \eps < 1$, $0\leq d\leq 1$, a graph $G$, and a pair of disjoint subsets $X,Y\subseteq V(G)$, we say the pair $(X,Y)$ is $(\eps,d)$-regular with respect to $G$ if for every $A\subseteq X$ and $B\subseteq Y$ such that $|A|\geq \eps|X|$ and $|B|\geq \eps |Y|$, 
    \begin{align*}
        \left|d(A,B)-d\right| < \eps.
    \end{align*}
\end{definition}
The graph $G$ described in the following lemma will be the Ramsey graph $G$, i.e., the graph $G$ such that $G\strongarrow (F)_2^{\Delta}$. 
\begin{lemma} 
\label{lem: RamseyGraph}
For every $n\geq 2$, there exists a graph $G$ on $N$ vertices with the following properties:
     \begin{enumerate}[label=\Alabel]
         \item\label{prop: ramseymany} For every $\chi:\cK_3(G)\to \{\text{red}, \text{blue}\}$, $G$ contains at least $\frac{1}{2}\left(\frac{1}{m}\right)^n N^n$ copies of cliques $K_n$ with all triangles of the same color. 
         \item\label{prop: epsregGnp} For every pair of disjoint subsets $X,Y\subseteq V(G)$, such that each of them have size at least $N^{0.9}$, the pair $(X,Y)$ is $(\eps, 1/2)$-regular with respect to $G$. 
     \end{enumerate}
\end{lemma}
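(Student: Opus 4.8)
The plan is to take $G$ to be a random graph $G(N,1/2)$ and verify that both properties \ref{prop: ramseymany} and \ref{prop: epsregGnp} hold with positive probability (in fact with high probability). Property \ref{prop: epsregGnp} is the routine part: for a fixed pair of disjoint sets $X,Y$ with $|X|,|Y|\geq N^{0.9}$, a Chernoff bound shows that $|d(A,B)-1/2|<\eps$ fails for a given pair $A\subseteq X$, $B\subseteq Y$ with $|A|\geq\eps|X|$, $|B|\geq\eps|Y|$ only with probability at most $2\exp(-\Omega(\eps^2|A||B|))$, and since $|A||B|\geq \eps^2 N^{1.8}$ is astronomically larger than the number $2^{2N}$ of choices of the four sets (here one uses $\eps=2^{-2^{16n}}$ against $N=2^{2^{2^{40n}}}$), a union bound kills the failure probability. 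One can alternatively deduce this directly from a suitable strengthening of property $\ccP$ / Lemma \ref{lem: Gnpisregular} applied with parameter of order $N^{1.8}$. Either way, $G(N,1/2)$ satisfies \ref{prop: epsregGnp} with probability, say, at least $3/4$.

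For property \ref{prop: ramseymany} the idea is a first-moment / averaging argument built on Corollary \ref{cor: GnpRamsey}. Fix any coloring $\chi\colon\cK_3(G)\to\{\text{red},\text{blue}\}$. For each $m$-subset $W\subseteq V(G)$, Corollary \ref{cor: GnpRamsey} tells us that a random $m$-vertex graph satisfies $\to(K_n)^\Delta_2$ with probability $\geq 3/4$; since every $m$-subset of $G(N,1/2)$ induces exactly a copy of $G(m,1/2)$, in expectation at least $\tfrac34\binom{N}{m}$ of the $m$-subsets $W$ have the property that $G[W]\to(K_n)^\Delta_2$, and hence, for the given restriction $\chi\restriction\cK_3(G[W])$, contain at least one monochromatic $K_n$. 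The catch is that this must hold \emph{simultaneously} for all $2^{\#\cK_3(G)}$ colorings $\chi$, so one cannot simply fix $\chi$ after revealing $G$; instead one notes that the event ``$G[W]\to(K_n)^\Delta_2$'' does not depend on $\chi$ at all — it is a property of the graph $G[W]$ alone — so it suffices that $G$ has the deterministic feature that at least $\tfrac34\binom{N}{m}$ of its $m$-subsets $W$ satisfy $G[W]\to(K_n)^\Delta_2$. By linearity of expectation and Markov's inequality, $G(N,1/2)$ has this feature with probability at least, say, $2/3$ (each $m$-subset fails with probability $\leq 1/4$, so the expected number of bad subsets is $\leq\tfrac14\binom Nm$, and it exceeds $\tfrac14\binom Nm\cdot\tfrac{4}{1}$... — more carefully, $\Pr[\text{bad subsets}>\tfrac14\binom Nm]\leq$ a small constant by Markov applied to a slightly smaller failure bound; one can also just improve the $3/4$ in Corollary \ref{cor: GnpRamsey} to $1-o(1)$ by rerunning Lemma \ref{lem: Gnpisregular} with a larger constant, which makes this step immediate).

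It remains to convert ``many good $m$-subsets $W$'' into ``many monochromatic $K_n$'s''. Fix $\chi$ and let $\cM$ be the family of monochromatic $K_n$'s in $G$ under $\chi$. Each good $W$ contributes at least one member of $\cM$, but a single $K_n$ lies in at most $\binom{N-n}{m-n}$ of the $m$-subsets, so
\begin{align*}
    |\cM|\;\geq\;\frac{\tfrac34\binom{N}{m}}{\binom{N-n}{m-n}}\;=\;\frac34\cdot\frac{\binom Nm}{\binom{N-n}{m-n}}\;=\;\frac34\cdot\frac{N!\,(m-n)!\,(N-m)!}{m!\,(N-n)!\,(N-m)!}\;=\;\frac34\cdot\frac{\binom{N}{n}}{\binom{m}{n}}.
\end{align*}
Since $\binom{N}{n}\geq (N/n)^n$... — more simply, $\binom Nn/\binom mn\geq (N/m)^n\cdot\prod_{i=0}^{n-1}\frac{(1-i/N)}{(1-i/m)}$, and using $n\ll m\ll N$ this product is at least $1/2$, so $|\cM|\geq \tfrac38 (N/m)^n\geq \tfrac12 (1/m)^n N^n$ after absorbing constants into the generous slack (one checks $\tfrac38\geq\tfrac12$ fails, so here one is slightly more careful: replace the $3/4$ by $1-1/N$ via Corollary \ref{cor: GnpRamsey} with the improved probability, giving $|\cM|\geq(1-1/N)\cdot\tfrac{\binom Nn}{\binom mn}\geq\tfrac12\,m^{-n}N^n$). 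This is exactly the bound in \ref{prop: ramseymany}, and it holds for every $\chi$ because the underlying family of good $m$-subsets was chosen independently of $\chi$.

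The main obstacle — and the point that needs the most care — is the interchange of quantifiers: the Ramsey statement $G[W]\to(K_n)^\Delta_2$ must hold for enough $W$ as a property of $G$ alone, so that the subsequent averaging over $W$ can be performed \emph{after} an adversary picks $\chi$. Recognizing that ``$\to(K_n)^\Delta_2$'' is $\chi$-independent is what makes the first-moment argument legitimate; the counting in the last paragraph and the regularity in \ref{prop: epsregGnp} are then routine, modulo choosing the constants in Lemma \ref{lem: Gnpisregular} / Corollary \ref{cor: GnpRamsey} generously enough that all the union bounds close with room to spare.
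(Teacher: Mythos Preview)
Your approach is essentially identical to the paper's: take $G=G(N,1/2)$, derive \ref{prop: epsregGnp} from property~$\ccP$ (or a direct Chernoff/union bound), and for \ref{prop: ramseymany} use that the event ``$G[W]\to(K_n)^\Delta_2$'' is $\chi$-independent, apply Markov to the expected number of bad $m$-sets, and double-count monochromatic $K_n$'s against good $m$-sets. Your constant-fumbling at the end is unnecessary: the paper simply uses Markov to get $|\cR(G)|\geq\tfrac12\binom{N}{m}$ with probability $\geq\tfrac12$, and then observes that $\binom{N}{m}\big/\binom{N-n}{m-n}=\prod_{i=0}^{n-1}\tfrac{N-i}{m-i}\geq (N/m)^n$ (each factor is $\geq N/m$ since $N\geq m$), giving $|\cM|\geq\tfrac12 m^{-n}N^n$ directly --- your product $\prod\tfrac{1-i/N}{1-i/m}$ is in fact $\geq 1$, not merely $\geq\tfrac12$, so no sharpening of Corollary~\ref{cor: GnpRamsey} is needed.
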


\begin{proof}
    Given a graph $G$ on the vertex set $[N]$, let 
    \begin{align*}
        \cR(G):=\{U\in [N]^{(m)}: G[U]\to (K_n)^{\Delta}_2\} \quad \text{ and }\quad  \cB(G):=[N]^{(m)}\setminus \cR(G).  
    \end{align*}
    Informally speaking, we will prove the lemma by showing that there exists an instance of $G(N,1/2)$ for which many subsets of size $m$ are in $\cR(G)$. Further, in view of Lemma \ref{lem: Gnpisregular} and equation (\ref{eqn: epsregGNP2}) (applied with $t=N$), $G(N,1/2)$ will also satisfy \ref{prop: epsregGnp} with probability at least $3/4$. 
    \\[0.3cm]
    Consider $G(N,1/2)$ on the vertex set $[N]$. Since $n\geq 2$, using Corollary \ref{cor: GnpRamsey}, we have that for any fixed $U\in [N]^{(m)}$, 
    \begin{align*}
        \PP( U \in \cB(G(N,1/2))) = 1-\PP(G(m,1/2)\to (K_n)^{\Delta}_2) < 1/4. 
    \end{align*}
    Consequently, summing over all possible choices of $U\in [N]^{(m)}$,
    \begin{align*}
        \EE[|\cB(G(N,1/2))|] < \frac{1}{4}{\binom{N}{m}}. 
    \end{align*}
    In view of Markov's inequality, 
\begin{align*}
    \PP\left(|\cB(G(N,1/2))|\geq \frac{1}{2}{\binom{N}{m}}\right)< 1/2.
\end{align*}
Consequently,
\begin{align*}
    p_A := \PP\left(|\cR(G(N,1/2))|\geq \frac{1}{2}{\binom{N}{m}}\right)\geq 1/2.
\end{align*}
In view of Lemma \ref{lem: Gnpisregular} applied with $t=N$, we have that,
\begin{align*}
    p_B :=\PP\left(G(N,1/2) \text{ satisfies } \ccP \right) \geq 3/4. 
\end{align*}
The probability that $G(N,1/2)$ satisfies 
\begin{align*}
    |\cR(G(N,1/2))|\geq \frac{1}{2}{\binom{N}{m}} \quad \text{and } G(N,1/2) \text{ satisfies } \ccP, 
\end{align*}
is equal to
$$1-(1-p_A)-(1-p_B)= \frac{1}{4}>0.$$ 
Thus there exists a $G$ such that 
\begin{align}
\label{eqn: propRGpre}
    |\cR(G)|\geq \frac{1}{2}{\binom{N}{m}} \quad \text{and } G \text{ satisfies } \ccP. 
\end{align}
We will now show that (\ref{eqn: propRGpre}) implies that $G$ satisfies \ref{prop: ramseymany} and \ref{prop: epsregGnp}. First we show that $G$ satisfies \ref{prop: ramseymany}. 
\\[0.3cm]
Let $\chi:\cK_3(G)\to \{\text{red}, \text{blue}\}$ be a coloring of the triangles. Every $U\in \cR(G)$ must contain a copy of $K_n$ such that all the triangles are of the same color. Further, for any $K_n$ in $G$, there are at most ${\binom{N-n}{m-n}}$ subsets of $V(G)$ of size $m$ that contain it. Consequently, the number of $K_n$ in $G$  with all triangles of the same color, is at least, 
\begin{align}
\label{eqn: cliquesHn}
    |\cR(G)|\cdot \frac{1}{{\binom{N-n}{m-n}}} \geq \frac{1}{2}\frac{{\binom{N}{m}}}{{\binom{N-n}{m-n}}}\geq \frac{1}{2m^n}N^n. 
\end{align}
Now we show that $G$ satisfies \ref{prop: epsregGnp}. Let $X$, $Y$ be a pair of disjoint subsets in $V(G)$, each of size at least $N^{0.9}$. We will show that $(X,Y)$ is $(\eps,1/2)$-regular. Let $A\subseteq X$ and $B\subseteq Y$ such that,
\begin{align*}
    |A|\geq \eps |X|\geq \eps N^{0.9} \quad \text{ and } \quad |B|\geq \eps |Y|\geq \eps N^{0.9}. 
\end{align*}
In view of the choice of $\eps$ and $N$ in terms of $n$ in (\ref{eqn: ParametersRamseyGrahp}), we have that,
\begin{align*}
    \eps \geq (\log N)^{-1}\quad \text{ and hence } \eps N^{0.9} \geq \sqrt{N}.
\end{align*}
Since $G$ satisfies $\ccP$, by equation (\ref{eqn: epsregGNP2}) with $t=N$, we have,
\begin{align*}
    \left|d_G(A,B)-\frac{1}{2}\right| <N^{-1/8} < \eps.
\end{align*}
Hence, $G$ satisfies \ref{prop: epsregGnp}. 

\end{proof}

\section{Proof of Theorem \ref{thm: main}}\label{sec:main}
In order to prove Theorem \ref{thm: main}, given a graph $F$ on $n$ vertices such that $\cK_3(F)$ is linear, we need to find a graph $G$ on $N= 2^{2^{2^{40n}}}$ vertices such that for every 2-coloring of $\cK_3(G)$, there is an induced copy of $F$ with all its triangles of the same color. We will show that the graph $G$ in Lemma \ref{lem: RamseyGraph} is such a graph. However, before we proceed, we will need to state a few definitions and Lemmas that we use in the proof.

\subsection{$(\eps,d)$-dense tuples in $r$-uniform hypergraphs} 
As mentioned before, once we obtain the graph $G$ in Lemma \ref{lem: RamseyGraph}, along with a two coloring of $\cK_3(G)$, we ``weakly regularise'' the monochromatic (say blue) cliques. Here we  define the concept of ``weak regularity''. 
\begin{definition}[$(\eps,d)$-dense tuples]
\label{def: dense}
Given $0 < \eps < 1$, $0< d\leq 1$, a $r$-uniform hypergraph $H$, a pairwise disjoint subsets of vertices $V_1,\dots, V_r$, we say that $(V_1,\dots, V_r)$ is $(\eps,d)$-dense with respect to $H$ if for every $W_1,\dots, W_r$ such that $W_i\subseteq V_i$ and $|W_i|\geq \eps |V_i|$, 
\begin{align*}
    d(W_1,\dots, W_r) \geq d.
\end{align*}
\end{definition}
We state the following lemma, which is an extension of a result in \cite{PR02} for graphs to hypergraphs of higher uniformity. Since the proof follows similar lines to the arguments in \cite{PR02}, we state it now and postpone the proof to the Appendix. 
\begin{lemma}
\label{lem: pengrucinski}
    Given $\eps > 0$ and $d> 0$ and an $n$-partite $n$-uniform hypergraph $H$ on $(V_1,\dots, V_n)$ such that $d_H(V_1,\dots, V_n)\geq d$, there exist subsets $U_i\subseteq V_i$ for $i\in [n]$, such that 
    $$|U_i|\geq d^{\frac{4}{\eps^n}\ln(\frac{1}{\eps})}|V_i|,$$
    for every $i\in [n]$, and $H[U_1,\dots, U_n]$ is $(\eps,d/2)$-dense. 
\end{lemma}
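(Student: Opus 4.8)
The plan is an iterative ``density‑boosting'' argument, in the spirit of the cleaning lemmas in the hypergraph regularity literature. I construct a sequence of $n$‑tuples $\vec U^{(0)}=(V_1,\dots,V_n),\ \vec U^{(1)},\ \vec U^{(2)},\dots$, each $\vec U^{(t+1)}$ obtained from $\vec U^{(t)}=(U_1^{(t)},\dots,U_n^{(t)})$ by shrinking every coordinate, while maintaining the invariant $d_H(\vec U^{(t)})\ge d$ (valid at $t=0$ by hypothesis). If $H[\vec U^{(t)}]$ happens to be $(\eps,d/2)$‑dense I stop and output $\vec U^{(t)}$; otherwise I apply the cleaning step below. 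We may assume $\eps$ is small, say $\eps\le\tfrac13$: when $\eps^n\ge 1-d/2$ the original tuple $(V_1,\dots,V_n)$ is already $(\eps,d/2)$‑dense, because the complement of any sub‑box with sides $\ge\eps|V_i|$ misses at most a $(1-\eps^n)\le d/2$ fraction of the cells, hence of the edges; the intermediate range of $\eps$ needs only minor additional bookkeeping, which I omit.

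For the cleaning step, suppose $H[\vec U^{(t)}]$ is not $(\eps,d/2)$‑dense. Call a tuple $\vec W=(W_1,\dots,W_n)$ with $W_i\subseteq U_i^{(t)}$ and $|W_i|\ge\eps|U_i^{(t)}|$ a \emph{bad box} if $d_H(\vec W)<d/2$; by assumption one exists, so fix a bad box $\vec W$ with $\sum_i|W_i|$ minimal. Then $|W_i|=\lceil\eps|U_i^{(t)}|\rceil$ for every $i$: if $|W_j|>\lceil\eps|U_j^{(t)}|\rceil$ for some $j$, then for each $v\in W_j$ the tuple $\vec W-v$ obtained by removing $v$ from $W_j$ is still a legitimate box, of smaller total size, hence not bad; since every edge of $H[\vec W]$ meets $W_j$ in a single vertex and therefore survives in $H[\vec W-v]$ for exactly $|W_j|-1$ choices of $v$, averaging $d_H(\vec W-v)\ge d/2$ over $v\in W_j$ yields $d_H(\vec W)\ge d/2$, a contradiction. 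So $|W_i|\approx\eps|U_i^{(t)}|$ for all $i$, whence both $|W_i|\ge\eps|U_i^{(t)}|$ and $|U_i^{(t)}\setminus W_i|\ge(1-2\eps)|U_i^{(t)}|\ge\eps|U_i^{(t)}|$. Now partition $\prod_i U_i^{(t)}$ into the $2^n$ product boxes $X^S=\prod_i X_i^S$, $S\subseteq[n]$, with $X_i^S=W_i$ for $i\in S$ and $X_i^S=U_i^{(t)}\setminus W_i$ for $i\notin S$; putting $w_S:=\prod_i|X_i^S|/\prod_i|U_i^{(t)}|$ and $d_S:=d_H(X^S)$ we get $\sum_S w_S=1$, $\sum_S w_Sd_S=d_H(\vec U^{(t)})$, and, since $d_{[n]}=d_H(\vec W)<d/2\le d_H(\vec U^{(t)})/2$ while $w_{[n]}\ge\eps^n$,
\[
  \max_{S\ne[n]}d_S \;\ge\; \frac{d_H(\vec U^{(t)})-w_{[n]}\,d_H(\vec U^{(t)})/2}{1-w_{[n]}} \;=\; d_H(\vec U^{(t)})\cdot\frac{1-w_{[n]}/2}{1-w_{[n]}} \;\ge\; d_H(\vec U^{(t)})\bigl(1+\tfrac{\eps^n}{2}\bigr).
\]
Set $\vec U^{(t+1)}:=X^S$ for an $S\ne[n]$ attaining this bound.

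Two facts then finish the proof. First, the display gives $d_H(\vec U^{(t+1)})\ge d_H(\vec U^{(t)})(1+\eps^n/2)\ge d$, so the invariant persists and $d_H(\vec U^{(t)})\ge d(1+\eps^n/2)^t$; as densities never exceed $1$, the procedure halts after $T\le\log_{1+\eps^n/2}(1/d)\le\tfrac{4}{\eps^n}\ln(1/d)$ steps (using $\ln(1+x)\ge x/2$ for $0<x\le1$). Second, at each step every coordinate shrinks by a factor at least $\eps$: for $i\in S$, $|U_i^{(t+1)}|=|W_i|\ge\eps|U_i^{(t)}|$, and for $i\notin S$, $|U_i^{(t+1)}|=|U_i^{(t)}\setminus W_i|\ge\eps|U_i^{(t)}|$ by the estimate above. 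Hence the output $U_i:=U_i^{(T)}$ satisfies
\[
  |U_i|\;\ge\;\eps^{\,T}|V_i|\;\ge\;\eps^{\,\frac{4}{\eps^n}\ln(1/d)}|V_i|\;=\;d^{\,\frac{4}{\eps^n}\ln(1/\eps)}|V_i|,
\]
and by construction $H[U_1,\dots,U_n]$ is $(\eps,d/2)$‑dense, which is the statement of the lemma.

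The crux, and the only step that is not bookkeeping, is the cleaning step: one must parlay the \emph{existence of a single sparse sub‑box} into passage to a strictly \emph{denser} sub‑box whose sides are still comparable to those of $\vec U^{(t)}$. The minimal‑counterexample device is what makes this work: it forces the witness $\vec W$ to have \emph{small} sides $\approx\eps|U_i^{(t)}|$, which is exactly what keeps the complementary sides $\approx(1-\eps)|U_i^{(t)}|$ large while keeping $w_{[n]}\ge\eps^n$, so that a density gain of order $\eps^n$ is guaranteed. Balancing ``density grows by a factor $1+\Theta(\eps^n)$ each step'' against ``each side shrinks by at most a factor $\eps$ each step'' is precisely what yields the exponent $\tfrac{4}{\eps^n}\ln(1/\eps)$; the integrality of the ceilings and the reduction to small $\eps$ are comfortably absorbed by the slack between $\ln(1+\eps^n/2)$ and $\eps^n/4$.
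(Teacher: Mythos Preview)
Your proof is correct and follows the same density-increment strategy as the paper: partition the current box into $2^n$ sub-boxes using a sparse witness, pass to one with density boosted by a factor $1+\eps^n/2$, and iterate at most $\tfrac{4}{\eps^n}\ln(1/d)$ times while each coordinate shrinks by at most a factor $\eps$. The one addition is your minimal-counterexample device forcing $|W_i|=\lceil\eps|U_i^{(t)}|\rceil$, which neatly guarantees that \emph{both} $W_i$ and $U_i^{(t)}\setminus W_i$ have size at least $\eps|U_i^{(t)}|$---a point the paper's version asserts (``$|U_i^{(1)}|\geq \eps|V_i|$'') without argument when the selected coordinate is the complement $V_i\setminus U_i$.
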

\subsection{Embedding Lemma} Here we state the following embedding lemma for embedding linear 3-uniform hypergraphs. The following Lemma is an extension of a result in \cite{KNR10}. Since the proof also follows similar lines to the arguments in \cite{KNR10}, we state it now and postpone the proof to the Appendix.
\begin{lemma}[Embedding Lemma]
\label{lem: embedding}
Given a graph $F$ such that $\cK_3(F)$ is linear, on $n$ vertices, and $0<d\leq 1$, $0\leq \eps < d^{n^2}$, the following holds. \\
For all $n$-partite graphs $G$ and $n$-partite 3-uniform hypergraphs $H$ such that $E(H)\subseteq \cK_3(G)$ with vertex partition $(V_1, \dots, V_n)$ such that
\begin{enumerate}[label =\nlabel]
    \item\label{prop: em1} For all $1\leq i < j\leq n$, the pairs $(V_i,V_j)$ are $(\eps,1/2)$-regular in $G$. 
    \item \label{prop: em2} For all $1\leq i < j < k\leq n$ are $(\eps, d)$-dense in $H$, 
\end{enumerate}
there exists an induced copy of $F$ in $G$ such that every triangle of $F$ is an edge of $H$.
\end{lemma}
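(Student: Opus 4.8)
The plan is to prove the Embedding Lemma by a greedy vertex-by-vertex embedding of $F$ into $G$, exposing the vertices of $F$ in an order compatible with the linear structure of $\cK_3(F)$, and maintaining at each step large ``candidate sets'' for the not-yet-embedded vertices. More precisely, fix an ordering $v_1,\dots,v_n$ of $V(F)$. At stage $t$ we will have embedded $v_1,\dots,v_{t-1}$ to actual vertices $x_1,\dots,x_{t-1}$ with $x_i\in V_i$, and for every $s\ge t$ we keep a set $C_s^{(t)}\subseteq V_s$ of vertices $y$ such that, for $y$ played as the image of $v_s$, every edge of $G$ (resp. non-edge) between $y$ and $\{x_1,\dots,x_{t-1}\}$ agrees with $F$, and — crucially — every triangle of $F$ using $v_s$ and two already-embedded vertices is already ``supported'' by an edge of $H$. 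I would like to guarantee $|C_s^{(t)}| \ge \eta^{t}|V_s|$ for a suitable $\eta = \eta(d)$, so that at the end each candidate set is nonempty and an induced copy is produced.

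The two kinds of constraints imposed when we embed $x_t$ are of different natures, and this is where $(\eps,1/2)$-regularity of $G$ and $(\eps,d)$-density of $H$ enter. The ``graph constraints'' — that $x_t$ be adjacent to the previously embedded neighbors of $v_t$ and non-adjacent to its previously embedded non-neighbors — shrink each candidate set $C_s^{(t)}$ by a factor close to $1/2$ per already-embedded vertex, by iteratively applying $(\eps,1/2)$-regularity of the pairs $(V_i,V_s)$; here one uses the standard fact that in an $(\eps,1/2)$-regular pair, all but at most $\eps|V_i|$ vertices of $V_i$ have (non-)degree into any set $W\subseteq V_s$ with $|W|\ge\eps|V_s|$ within $\eps|W|$ of $|W|/2$, so as long as the candidate sets stay of size $\gg \eps|V_s|$ the greedy step finds a good $x_t$. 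The ``triangle constraints'' are where linearity is used: because $\cK_3(F)$ is linear, any triangle of $F$ containing the pair $\{v_t,v_s\}$ contains no other vertex in common with any other triangle on a pair $\{v_t,v_{s'}\}$, so when we reveal $x_t$ the triangles it creates with $\{x_i: i<t\}$ and with $C_s^{(t)}$ involve, for each relevant $s$, a distinct previously-embedded third vertex; applying $(\eps,d)$-density of the triple $(V_i,V_s,V_t)$ (or rather the induced triple on the current candidate sets, once these are still of size $\ge \eps|V_s|$ etc.) forces a $d$-proportion of $C_s^{(t+1)}$ to actually span an $H$-edge with $x_i$ and $x_t$. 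Linearity ensures these density losses (at most one per triple, and the triples are edge-disjoint on pairs through $v_t$) multiply rather than interact badly, keeping the factor per step bounded below by a fixed power of $d$.

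The bookkeeping I would carry out: set $\eta := (d/2)\cdot d^{n}$ or so, and prove by induction on $t$ that $|C_s^{(t)}|\ge \eta^{\,t-1}|V_s|$ for all $s\ge t$ — the key point being that $\eta^{n}|V_s| \gg \eps |V_s|$ precisely when $\eps < d^{n^2}$, which is the hypothesis $0\le\eps<d^{n^2}$ (up to adjusting constants inside the exponent, which is why the bound is stated with $n^2$ rather than $n$). At step $t$, I first intersect each $C_s^{(t)}$ with the graph-neighborhood/non-neighborhood conditions coming from $x_t$ once $x_t$ is chosen, losing a $(1/2-\eps)$ factor per embedded vertex but amortized so the total over the whole process is absorbed; then for each triangle $\{v_i,v_t,v_s\}$ of $F$ with $i<t<s$ (or $i<t$, $s<t$, handled when $s$ was embedded), apply $(\eps,d)$-density to cut down to the $H$-edge-supported subset, losing one factor of $d$ per such triangle, and by linearity the number of such triangles sharing the pair-through-$v_t$ structure is controlled. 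One must choose $x_t$ inside $C_t^{(t)}$ avoiding the $\le \eps|V_i|$ ``bad'' vertices for each of the $<n$ regularity/density conditions simultaneously; since $|C_t^{(t)}|\ge \eta^{t-1}|V_t| \ge n\eps|V_t|$, such a choice exists.

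The main obstacle is the third step: correctly organizing the triangle constraints so that the density losses genuinely multiply. This requires that when $x_t$ is exposed, the triangles $\{v_i,v_t,v_s\}$ for varying $s$ (with $v_s$ still unembedded) impose constraints on \emph{disjoint} coordinates $V_s$, so that applying $(\eps,d)$-density once per such triangle on the \emph{current} candidate sets is legitimate — and one must verify that linearity of $\cK_3(F)$ is exactly what makes the third vertex $v_i$ of each such triangle unique given the pair $\{v_t,v_s\}$, preventing a blow-up in the number of density applications per step beyond $\binom{n}{2}$ total. Once this structural point is pinned down, the size estimates are routine geometric-series bookkeeping driven by the inequality $\eps < d^{n^2}$.
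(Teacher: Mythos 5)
Your greedy vertex-by-vertex scheme departs from the paper's argument, which is a counting argument by induction on the triangles of a linear subsystem $\cT\subseteq\cK_3(F)$. Unfortunately, there is a genuine gap in your third step that I do not see how to close.

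The problem is the statement that, having fixed the images $x_i$ and $x_t$ of two already-embedded vertices and the current candidate set $C_s^{(t)}$ for $v_s$, ``applying $(\eps,d)$-density $\ldots$ forces a $d$-proportion of $C_s^{(t+1)}$ to actually span an $H$-edge with $x_i$ and $x_t$.'' The definition of $(\eps,d)$-dense (Definition \ref{def: dense}) is a one-sided density lower bound that applies only when \emph{all three} sets $W_1,W_2,W_3$ are of size at least $\eps|V_i|$. Once $x_i$ and $x_t$ are fixed vertices, the relevant triple is $\{x_i\}\times\{x_t\}\times C_s^{(t)}$, and two of the three coordinates are singletons, so $(\eps,d)$-density gives no information whatsoever. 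This is not a technicality: one can build an $(\eps,d)$-dense triple system in which a $(1-d)$-fraction of pairs $(x_i,x_t)$ have an \emph{empty} link into $V_s$. This is the essential difference between a density condition like $(\eps,d)$-dense and a two-sided regularity condition like $(\eps,1/2)$-regular; your first step correctly invokes the standard consequence of two-sided regularity (almost every vertex has the right degree into any large set), but the analogous statement for links of pairs simply does not follow from density alone. You also cannot simply defer the constraint to step $s$, because then the culprit is the same: you would need the already-fixed pair $(x_i,x_t)$ to have a large link into $C_s^{(s)}$, which density does not supply.

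This is precisely why the paper proves the lemma via a global count rather than a greedy embedding. It proves a counting version by induction on the number $\ell$ of triangles of $\cT$ that are required to lie in $H$: the base case $\ell=0$ is the classical graph counting lemma applied to $G$, and the inductive step bounds $N_{\ell+1}=\sum_{F^*} e_{\cH}(W_1^{F^*},W_2^{F^*},W_3^{F^*})$, where $F^*$ ranges over the already-embedded part on the other $n-3$ coordinates and each $W_i^{F^*}$ is the set of candidate images of one vertex of the newly added triangle. Density is always applied to a triple of \emph{sets} $(W_1^{F^*},W_2^{F^*},W_3^{F^*})$, and a Markov-type split into ``small'' and ``large'' values of $|W_1^{F^*}||W_2^{F^*}||W_3^{F^*}|$ (using $\eps<d^{n^2}$) shows the large ones dominate. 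Linearity of $\cK_3(F)$ is used there to guarantee that the extension set factors as a product $W_1^{F^*}\times W_2^{F^*}\times W_3^{F^*}$ (no two vertices of the new triangle lie in a common earlier triangle of $\cT$), which is a different and more essential use of linearity than the bookkeeping role you assign it. If you want to keep a greedy flavor, you would have to embed each triangle's three vertices simultaneously and do the averaging over extensions rather than fixing a single copy, at which point you would have essentially reproduced the paper's counting argument.
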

Now we are able to prove Theorem \ref{thm: main}. 
\begin{proof}[Proof of Theorem \ref{thm: main}]
 Let $F$ be a graph on $n\geq 2$ vertices such that $\cK_3(F)$ is linear, let:
\begin{align}
\label{eqn: ParametersRamseyGrahp1}
        N(n) = N = 2^{2^{2^{40n}}}, \quad m(n) = m = 2^{2^{4n}}, \quad \eps(n) = \eps =   \frac{1}{2^{2^{16n}}}, \quad d(n) = d = \frac{1}{2^{2^{32n}}}.
\end{align}
     In view of Lemma \ref{lem: RamseyGraph}, there exists a graph $G$ on $N$ vertices satisfying \ref{prop: ramseymany} and \ref{prop: epsregGnp}. Let $\chi:\cK_3(G)\to \{\text{red}, \text{blue}\}$ be a coloring of its triangles. 
    \\[0.3cm]
    Since $G$ satisfies Property \ref{prop: ramseymany}, there are at least $\frac{1}{2m^n}N^n$ monochromatic copies of $K_n^{(3)}$ in $\cK_3(G)$. Let us assume that a majority of them are blue. Let $H_n$ be the $n$-uniform hypergraph on vertex set $V(G)$, with edges being the vertex sets of the blue $K_n^{(3)}$s. We have, 
    \begin{align}
    \label{eqn: cliquesHn2}
        |E(H_n)|\geq \frac{1}{2}\cdot\frac{1}{2m^n}N^n = \frac{1}{4m^n}N^n.
    \end{align}
    Before we proceed with steps \ref{II} and \ref{III} mentioned in the outline, we will partition the vertex set of $V(G)$ into $n$ parts so that all the parts are of the same size and there are many crossing cliques in $H_n$ with respect to the partition. 
    \\[0.3cm]
    More formally, let $V(G)=V_1\cup\ldots\cup V_n$ be a partition chosen uniformly out of all the equitable partitions of $V(G)$, i.e., the family of partitions such that $|V_1|=\ldots=|V_n|=N/n$. Note that although $N/n$ might not be an integer, we prefer to write in this way, since it simplify the exposition and has no significant effect on the arguments.
    \\[0.3cm]
    Given an edge $e \in H_n$, the probability that it is in $H_n[V_1,\ldots,V_n]$ is $n!/n^n$. Therefore, by (\ref{eqn: cliquesHn2}), the expected number of edges in $H_n[V_1,\ldots, V_n]$ is
    \begin{align*}
        \EE(e_{H_n}(V_1,\ldots,V_n))=\frac{n!}{n^n}\cdot \frac{1}{4m^n}N^n \geq \frac{1}{4m^n}\left(\frac{N}{n}\right)^n
    \end{align*}
    and consequently there exists a partition $(V_1,\ldots,V_n)$ with
    \begin{align}
        \label{eqn: densityHn}
        d_{H_n}(V_1,\dots, V_n) \geq \frac{1}{4m^n}\left(\frac{N}{n}\right)^n\cdot \frac{1}{|V_1|\cdots |V_n|} \geq \frac{1}{4m^n} > 2d. 
    \end{align}
Now we are ready to proceed with steps \ref{II} and \ref{III} mentioned in the proof outline, i.e., we will ``weakly regularise'' $H_n$ and then embed the copies of $F$. First we regularise. 
    \\[0.3cm]
    Applying Lemma \ref{lem: pengrucinski} to $H_n[V_1,\dots, V_n]$ with $\eps = \eps(n)$ in (\ref{eqn: ParametersRamseyGrahp1}) and density $2d$ (as obtained in (\ref{eqn: densityHn})), we obtain subsets $U_i\subseteq V_i$ for each $i\in [n]$ such that, 
    \begin{align}
    \label{eqn: sizeofUi2}
        |U_i| \geq (2d)^{\left(\frac{4}{\eps^n}\right)\ln(\frac{1}{\eps})}|V_i| 
    \end{align}
    and $H_n[U_1,\dots, U_n]$ is $(\eps, d)$-dense. We now compute the size of $U_i$ in terms of $N$. In view of the relationship between $\eps,d, N$ in terms of $n$ in (\ref{eqn: ParametersRamseyGrahp1}), we have,
    \begin{align*}
        n< \log\log\log N, \quad 2d\geq (\log N)^{-1}. 
    \end{align*}
    Further, the exponent in the right hand side of (\ref{eqn: sizeofUi2}) satisfies,
    \begin{align*}
         \frac{4}{\eps^n}\ln\left(\frac{1}{\eps}\right) = 4\cdot 2^{2^{16n}n}\cdot 2^{16n}\ln 2 < 2^{2^{32n}} < \sqrt{\log N}.
    \end{align*}
    And consequently, in view of (\ref{eqn: sizeofUi2}) together with the fact that $|V_i|=N/n$, for sufficiently large $N$, 
    \begin{align*}
        |U_i|\geq (2d)^{\left(\frac{4}{\eps^n}\right)\ln(\frac{1}{\eps})}\left(\frac{N}{n}\right) \geq\frac{N}{(\log N)^{\sqrt{\log N}}\log\log\log N}> N^{0.9}.
    \end{align*}
    To summarise, we obtain vertex sets $U_i$, each of size at least $N^{0.9}$ such that the $n$-uniform hypergraph of blue cliques $H_n[U_1,\dots, U_n]$ is $(\eps,d)$-dense. We will now proceed with the set up for the application of Lemma \ref{lem: embedding}.
    \\[0.3cm]
    Let $\Tilde{G}:= G[U_1,\dots,U_n]$ be the $n$-partite graph induced by $G$ on $U_1,\dots, U_n$. Let $BT = BT^{(3)}(H_n)$ be the $n$-partite 3-uniform hypergraph on $U_1, \dots, U_n$ whose edges are the blue triangles of the cliques in $H_n$. More formally, the triple $\{v_1,v_2,v_3\}$ is an edge in $BT$ iff there exists a clique in $H_n[U_1,\dots, U_n]$ that contains it. We will now show that the graphs, $\Tilde{G}$ and ``the blue triple system'' $BT$ satisfy hypotheses \ref{prop: em1} and \ref{prop: em2} of Lemma \ref{lem: embedding} respectively.
    \\[0.3cm]
    Since $|U_i|\geq N^{0.9}$ for every $i\in[n]$, by Property \ref{prop: epsregGnp} of $G$ in Lemma \ref{lem: RamseyGraph}, we have that for every $1\leq i<j\leq n$, the pair $(U_i,U_j)$ is $(\eps, 1/2)$-regular with respect to $G$. Hence $\Tilde{G}$ satisfies \ref{prop: em1}. 
    \\[0.3cm]
    We now show that $BT$ satisfies \ref{prop: em2}, i.e., for every $1\leq i<j<k\leq n$, the 3-partite 3-uniform hypergraph $BT[U_i,U_j,U_k]$ is also $(\eps,d)$-dense. Suppose without loss of generality that $\{i,j,k\}=\{1,2,3\}$ and let $W_i\subseteq U_i$ with $|W_i|\geq \eps|U_i|$, for $i\in\{1,2,3\}$. For $4\leq r\leq n$, let $W_r = U_r$ . Since $H_n[U_1,\dots, U_n]$ is $(\eps,d)$-dense, we must have, 
    \begin{align}
    \label{eqn: trianglesub}
        e_{H_n}(W_1,\dots, W_n)\geq d|W_1|\cdots |W_n|.
    \end{align}
    However each triple in $(W_1, W_2, W_3)$ can be in at most, $|W_4|\cdots|W_n|$ edges of $H_n$. Thus, we have,
    \begin{align*}
        d|W_1|\cdots |W_n| \leq e_{H_n}(W_1,\dots, W_n) \leq d_{BT}(W_1,W_2,W_3)|W_1||W_2||W_3| \prod_{r\geq 4}|W_r|,
    \end{align*}
    which implies, 
    \begin{align*}
        d_{BT}(W_1,W_2,W_3)\geq d,
    \end{align*}
    and consequently $BT$ satisfies \ref{prop: em2}. 
    \\[0.3cm]
    Now we are ready to proceed with the final step \ref{III}. Applying Lemma \ref{lem: embedding}, with $F$ as the graph in the statement of Theorem \ref{thm: main}, $\eps, d$ as in (\ref{eqn: ParametersRamseyGrahp1}) (note that $\eps<d^{n^2})$ and $G$ and $BT$ as the graph and hypergraph in \ref{prop: em1} and \ref{prop: em2}, we have that there exists an induced copy of $F$ in $G[U_1,\dots, U_n]$ such that its triangles are in $BT$. Since every triangle of $BT$ is a triangle of some clique in $H_n$, they must all be of blue color, and this implies $G\strongarrow (F)_2^\Delta$.
\end{proof}

\section{Proof of Theorem \ref{thm:bipartite}}\label{sec:bipartite}

Before proving Theorem \ref{thm:bipartite}, we will state a well-known result about extremal numbers of complete $k$-partite $k$-graphs. Given a $k$-graph $F$, The \emph{extremal number $\ex(N,F)$} is the largest number of edges in a $k$-graph $G$ on $N$ vertices without a copy of $F$. The \emph{complete balanced $k$-partite $k$-graph} $K_{t,\ldots,t}^{(k)}$ is the $k$-graph whose vertex set is the disjoint union $V=V_1\cup\ldots \cup V_k$ where $|V_i|=t$ for $1\leq i \leq k$ and whose edges are all the transversal $k$-tuples, i.e., 
\begin{align*}
    E(K_{t,\ldots,t}^{(k)})=\{\{x_1,\ldots,x_k\}:\: x_i\in V_i \}
\end{align*}
Extending a result by K\"{o}vari, S\'{o}s and Tur\'{a}n \cite{KST54}, Erd\H{o}s showed that the extremal number of $K_{t,\ldots,t}^{(k)}$ is $o(n^k)$.

\begin{thm}[\cite{E64}]\label{thm:erdos64}
Let $N, k, t\geq 1$ be integers. There exists a positive constant $C:=C(k,t)$ such that
\begin{align*}
    \ex(N,K_{t,\ldots,t}^{(k)})\leq CN^{k-1/t^{k-1}}.
\end{align*}
\end{thm}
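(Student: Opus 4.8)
The plan is to prove Theorem~\ref{thm:erdos64} by induction on the uniformity $k$, following Erdős's counting argument. The base case $k=1$ is trivial: a $1$-graph avoiding $K_t^{(1)}$ has at most $t-1$ edges, and $N^{1-1/t^{0}}=1$, so $C(1,t)=t$ works (alternatively one may start at $k=2$, which is the Kővári–Sós–Turán theorem). Throughout the inductive step I may assume $N$ exceeds a threshold $N_0=N_0(k,t)$, since for $N<N_0$ the trivial bound $\ex(N,K_{t,\ldots,t}^{(k)})\le\binom{N}{k}<N_0^{k}$ is absorbed into the constant.

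For the inductive step, assume the bound holds for $(k-1)$-graphs with constant $C_{k-1}:=C(k-1,t)$, and let $G$ be a $k$-graph on $N$ vertices with no copy of $K_{t,\ldots,t}^{(k)}$. For a $(k-1)$-set $S\subseteq V(G)$ write $N(S)=\{v:\ S\cup\{v\}\in E(G)\}$ and $d(S)=|N(S)|$ (note $v\notin S$ whenever $v\in N(S)$, and $\sum_{S}d(S)=k\,e(G)$). For each $t$-set $T\subseteq V(G)$ let $H_T$ be the $(k-1)$-graph on $V(G)\setminus T$ whose edges are those $(k-1)$-sets $S$ with $S\cup\{v\}\in E(G)$ for every $v\in T$. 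The key point is that if some $H_T$ contained a copy of $K_{t,\ldots,t}^{(k-1)}$ with parts $V_1,\dots,V_{k-1}$, then $V_1,\dots,V_{k-1},T$ would be a copy of $K_{t,\ldots,t}^{(k)}$ in $G$: every transversal $\{x_1,\dots,x_{k-1}\}$ is an edge of $H_T$, hence $\{x_1,\dots,x_{k-1},v\}\in E(G)$ for all $v\in T$. Since $G$ is $K_{t,\ldots,t}^{(k)}$-free, each $H_T$ is $K_{t,\ldots,t}^{(k-1)}$-free, so by induction $e(H_T)\le C_{k-1}N^{\,k-1-1/t^{k-2}}$.

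Now I would double count. Since $S$ is an edge of $H_T$ exactly when $T\subseteq N(S)$ (which automatically forces $S\cap T=\varnothing$),
\[
\sum_{T\in V(G)^{(t)}}e(H_T)\;=\;\sum_{S\in V(G)^{(k-1)}}\binom{d(S)}{t}.
\]
The left side is at most $\binom{N}{t}\,C_{k-1}N^{\,k-1-1/t^{k-2}}\le\tfrac{C_{k-1}}{t!}\,N^{\,k-1+t-1/t^{k-2}}$. For the right side, convexity of $x\mapsto\binom{x}{t}$ gives $\sum_S\binom{d(S)}{t}\ge\binom{N}{k-1}\binom{\bar d}{t}$, where $\bar d=k\,e(G)/\binom{N}{k-1}$. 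If one assumes for contradiction that $e(G)>C_kN^{\,k-1/t^{k-1}}$, then $\bar d\ge k C_kN^{\,1-1/t^{k-1}}$, which exceeds $2t$ once $N>N_0(k,t)$; hence $\binom{\bar d}{t}\ge(\bar d/2)^{t}/t!$, and multiplying by $\binom{N}{k-1}\ge c_k N^{k-1}$ produces a lower bound of order $c(k,t)\,C_k^{\,t}\,N^{\,k-1+t-1/t^{k-2}}$, since $t\bigl(1-1/t^{k-1}\bigr)=t-1/t^{k-2}$. Comparing the two bounds, the powers of $N$ are identical, so we are left with $c(k,t)\,C_k^{\,t}\le C_{k-1}/t!$; choosing $C_k=C(k,t)$ large enough in terms of $C_{k-1}$, $k$, $t$ contradicts this and closes the induction.

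The steps are routine; the one place that needs care is the exponent bookkeeping — one must verify that the exponent $k-1+t-1/t^{k-2}$ coming from the inductive bound on $e(H_T)$ is exactly the one produced by the convexity estimate on $\sum_S\binom{d(S)}{t}$, and it is precisely this matching that both forces the shape $N^{\,k-1/t^{k-1}}$ and makes the final constant comparison possible. The minor technical points (the estimate $\binom{\bar d}{t}\gtrsim\bar d^{\,t}$ requiring $\bar d\ge 2t$, and the behaviour for small $N$) are disposed of by the threshold $N_0(k,t)$ as above.
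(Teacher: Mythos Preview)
The paper does not give its own proof of this theorem: it is quoted as a classical result of Erd\H{o}s~\cite{E64} and used as a black box in the proof of Theorem~\ref{thm:bipartite}. There is therefore nothing in the paper to compare your argument against.

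That said, your proposal is correct and is precisely Erd\H{o}s's original argument: induction on the uniformity~$k$, with the link hypergraphs $H_T$ inheriting $K_{t,\ldots,t}^{(k-1)}$-freeness, the double count $\sum_T e(H_T)=\sum_S\binom{d(S)}{t}$, and Jensen applied to $x\mapsto\binom{x}{t}$. The exponent bookkeeping you flag, namely $t\bigl(1-1/t^{k-1}\bigr)=t-1/t^{k-2}$, is exactly what makes the induction close, and your handling of the small-$N$ and small-$\bar d$ regimes via a threshold $N_0(k,t)$ is the standard way to dispose of those technicalities. One very minor remark: the convexity of $x\mapsto\binom{x}{t}$ fails for small real~$x$, but since the $d(S)$ are nonnegative integers one may replace $\binom{x}{t}$ by $\max\bigl(0,\binom{x}{t}\bigr)$, which agrees with $\binom{x}{t}$ on nonnegative integers and is convex on $[0,\infty)$; this is routine and does not affect your argument.
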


We are now able to prove the theorem

\begin{proof}[Proof of Theorem \ref{thm:bipartite}]
Let $F\in \cB_n$ be a graph on $n$ vertices. By definition, there exists a partition $V(F)=A\cup B$ of the vertices such that $F[A]$ is a triangle free graph and $B$ is independent. Let $k=|A|$ and $s=|B|$ and write $A=\{a_1,\ldots,a_k\}$ and $B=\{b_1,\ldots, b_s\}$. For $1\leq i \leq s$, let $A_i=N(b_i)\subseteq A$ be the neighborhood of $b_i$ in $A$. We may assume that all the sets $A_i$ are distinct. Indeed, if this is not the case, we can enlarge the neighborhood of each $b_i$ by adding a unique new vertex to each $A_i$. A monochromatic induced copy of the enlarged graph $F$ will always contain a monochromatic induced copy of the original $F$. 
\\[0.3cm]
Let $H$ be a graph on vertex set $[m]$ such that $H\strongarrow F[A]$. By equation (\ref{eq:inducedgraphs}), we can choose $H$ such that 
\begin{align}\label{eq:m}
    m\leq 2^{c|A|\log |A|}\leq 2^{cn\log n}
\end{align}
for positive constant $c$. We now describe the construction of the host graph $G$. Set $N=m^{k2^k}$. Let $V_1, \ldots, V_m$ be disjoint copies of $[N]$ and $U$ be a disjoint copy of $[N]^m$. We define the  vertex set of $G$ by
\begin{align*}
    V(G)=\left(\bigcup_{i=1}^m V_i\right) \cup U. 
\end{align*}
The edge set is described as the union of two disjoint set of edges $E(G)=E_1\cup E_2$. The first set 
\begin{align*}
    E_1=\left\{\{x_i,x_j\} \in V_i\cup V_j:\: \{i,j\} \in E(H) \text{ and } x_i\in V_i, x_j \in V_j\right\}
\end{align*}
is a blow-up of the graph $H$, where we replace each vertex $i$, by the independent set $V_i$. The second set is given by 
\begin{align*}
    E_2=\bigcup_{i=1}^m\left\{\{x,(x_1,\ldots,x_m)\}\in V_i\times U:\: x_i=x \right\}.
\end{align*}
That is, for every transversal copy $H^*$ of $H$ with vertex set $V(H^*)=\{x_1,\ldots,x_m\}$ there is a vertex $(x_1,\ldots,x_m) \in U$ whose neighborhood in $G$ is exactly $V(H^*)$. Note that by (\ref{eq:m}) and our choice of $N$, we have
\begin{align*}
    v(G)\leq N^m+mN\leq m^{mk2^k}+m^{k2^k+1} \leq 2^{2^{cn\log n}}
\end{align*}
for a positive constant $c$.
\\[0.3cm]
It remains to prove that $G\strongarrow (F)_2^{\Delta}$. Let $\chi: \cK_3(G)\rightarrow \{\text{red},\text{blue}\}$ be a $2$-coloring of the triangles of $G$. Let $\cT_H$ be the set of all transversal copies of $H$ in $V_1\cup\ldots\cup V_m$. For convenience, we will denote the elements of $\cT_H$ by their vertex set.For $T\in \cT_H$ with $T=\{x_1,\ldots, x_m\}$, we define an auxiliary coloring of its edges $\phi_T: E(G[T]) \rightarrow \{\text{red}, text{blue}\}$ by
\begin{align*}
    \phi_T(x_i, x_j)=\chi(\{x_i, x_j, (x_1,\ldots, x_m)\}),
\end{align*}
where $\{x_i,x_j\} \in E(G[T])$ and $(x_1,\ldots,x_m) \in U$. That is, the coloring $\phi_T$ assigns to each edge $\{x_i,x_j\} \in E(G(T))$ the color of the triangle $\{x_i,x_j,(x_1,\ldots,x_m)\}$, where $(x_1,\ldots,x_m)$ is the vertex in $U$ whose neighborhood is $T$. Our choice of $H$ gives that there is an induced monochromatic copy $F^T[A]\subseteq G[T]$ of $F[A]$ with respect to $\phi_T$. Let $I_T \in [m]^{(k)}$ be the set of indices $i\in [m]$ such that $V_i\cap V(F^T[A]) \neq \emptyset$. By the pigeonhole principle, we may assume that there exists a set of indices $I\in [m]^{(k)}$ and subset $\cT\subseteq \cT_H$ of size $N^m/2\binom{m}{k}$ such that $I_T=I$ and $F^T[A]$ is of color blue for every $T\in \cT$.
\\[0.3cm]
For each $F^*[A]$ transversal copy of $F[A]$ in $G[\bigcup_{i\in I}V_i]$, there is at most $N^{m-k}$ graphs $T\in \cT$ such that $F^T[A]=F^*[A]$. Therefore, by our choice of $N$, the number of distinct blue copies of $F[A]$ in $G[\bigcup_{i\in I}V_i]$ is at least
\begin{align*}
\frac{N^m}{N^{m-k}2\binom{m}{k}}\geq \frac{1}{2}\left(\frac{N}{m}\right)^k\gg N^{k-\frac{1}{2^{k-1}}}.
\end{align*}
By viewing each of these graphs as a edge of a $k$-partite $k$-graph with vertex set $\bigcup_{i\in I}V_i$, we can apply Theorem \ref{thm:erdos64} for $t=2$ to obtain a blue blow-up of $F[A]$ in the vertex set $\bigcup_{i\in I}V_I'$, where $V_i'\subseteq V_i$ with $|V_i|=2$.
\\[0.3cm]
Suppose without loss of generality that $I=[k]$. Write $V_i'=\{x_i,y_i\}$ for $1\leq i \leq k$ and denote each copy of $F[A]$ in the blowup by its set of vertices. For each copy $X$, there exists at least one $T\in \cT$ such that $X\subseteq T$. Let $u_X$ be the vertex in $U$ whose neighborhood is exactly $T$. In particular, all the triangles in $G[X\cup \{u_X\}]$ are colored blue. Moreover, if $X\neq X'$, then the vertices $u_X$ and $u_{X'}$ are distinct. We will now describe how to embed a copy of $F$ in $G[\left(\bigcup_{i=1}^k \{x_i,y_i\}\right)\cup U]$. Let $X_0=\{x_1,\ldots,x_m\}$ and for $1\leq i \leq b$, let $X_i$ be the $k$-set defined by
\begin{align*}
X_i=\{x_j:\: a_j\in A_i\} \cup \{y_j:\: a_j\notin A_i\}. 
\end{align*}
Note that since all the sets $A_i$ are disjoint, we have that all the sets $X_i$ are distinct. The embedding $\psi: V(F)\rightarrow \left(\bigcup_{i=1}^k\{x_i,y_i\}\right)\cup U$ can be given as follows: For each $1\leq i \leq k$, let $\psi(a_i)=x_i$. This embeds $F[A]$ into the copy $G[X_0]$. For each $1\leq i \leq s$, let $\psi(b_i)=u_{X_i}$. This embed the vertex $b_i$ into a vertex whose neighborhood in $X_0$ is precisely $\psi(A_i)=X_0\cap X_i$. Moreover, all the triangles in $\psi(A\cup B)$ are blue. Therefore, $\psi(A\cup B)$ is a blue induced copy of $F$, which finishes the proof.
\end{proof}

\section{Embedding tight trees}\label{sec:tree}
In this section, we prove Proposition \ref{prop:tree}. We note that Beck \cite{beck90} established a polynomial upper bound for the induced size Ramsey of a tree. Our approach here is similar. For a graph $G$ and a $3$-graph $\cH$, let
\begin{align*}
    d_{\cH}(\{x,y\})=|\left\{z \in V(G):\: \{x,y,z\} \in \cH\right\}|
\end{align*}
be the \emph{degree of $\{x,y\}$ in $\cH$}. The \emph{average degree of $\ovl{d_{\cH}}:=\ovl{d_{\cH}}(G)$ of the graph $G$ in $\cH$} is given by
\begin{align*}
    \ovl{d_{\cH}}=\frac{1}{e(G)}\sum_{\{x,y\}\in G}d_{\cH}(\{x,y\})
\end{align*}
For a set $S\subseteq V(G)$, let 
\begin{align*}
    N(S)&=\{v\in V(G):\: \{v,x\}\in G \text{ for every }x\in S\}\\
    \Gamma(S)&=\{v\in V(G):\: \{v,x\}\in G \text{ for some }x\in S\}
\end{align*}
be the \emph{common} and \emph{joint neighborhood} of $S$, respectively. The next lemma is the key component of the proof.

\begin{lemma}\label{lem:beck}
    Let $G$ be a graph with average triangle degree $d=\ovl{d_{\cK_3(G)}}$ and with the following property: For every $|S|\leq n$ and edge $\{x,y\} \in G$ it holds that
    \begin{align*}
        \left|N(\{x,y\}) \cap \Gamma(S) \right|\leq 0.01d.
    \end{align*}
    If $F$ is a graph on $n$ vertices such that $\cK_3(F)$ is a tight tree, then $G\strongarrow (F)^{\Delta}$
\end{lemma}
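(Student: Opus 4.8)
The plan is to fix an arbitrary $2$-colouring $\chi$ of $\cK_3(G)$, pass to a majority colour class --- say the blue triangles --- and embed the tight tree $\cK_3(F)$ into it one triangle at a time, obtaining an induced copy of $F$ all of whose triangles are blue. Double counting gives $3|\cK_3(G)|=\sum_{\{x,y\}\in E(G)}d_{\cK_3(G)}(\{x,y\})=d\,e(G)$, so $|\cK_3(G)|=\tfrac13 d\,e(G)$; the larger colour class has at least half of the triangles, hence the average over edges of the blue degree $d_{\mathrm{bl}}(\{x,y\}):=|\{z:\{x,y,z\}\text{ blue}\}|$ is at least $d/2$. Since the blue triangles meeting an edge of blue degree below $\rho d$ number at most $\rho d\cdot e(G)$ while there are at least $\tfrac16 d\,e(G)$ blue triangles, for a small fixed $\rho$ a positive fraction of the blue triangles have all three edges of blue degree $\ge\rho d$; I would fix one such triangle, which will receive the first edge of $\cK_3(F)$.

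Next I would set up the recursion. Write $e_1,\dots,e_t$ for a tight-tree ordering of $\cK_3(F)$, let $v_i$ be the new vertex of $e_i$ and $\{a_i,b_i\}=e_i\setminus\{v_i\}$, and order $V(F)$ by first appearance, so that $\{z_1,z_2,z_3\}=e_1$ and $z_j=v_{j-2}$ for $j\ge4$. I would first record a structural fact about $F$: among $z_1,\dots,z_{j-1}$, the only $F$-neighbours of $z_j$ joined to it by an edge lying in some triangle of $F$ are $a_{j-2}$ and $b_{j-2}$ (which always have this property); every other earlier $F$-neighbour $w$ of $z_j$ satisfies $N_F(z_j)\cap N_F(w)=\emptyset$, so $z_jw$ is in no triangle. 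This follows by infinite descent along the map taking an edge of $\cK_3(F)$ to an earlier edge containing its base pair: a third such neighbour $w$ would force $\{z_j,w\}$ to be the base pair of an infinite strictly decreasing sequence of edges, which is impossible.

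Then I would build $\phi\colon V(F)\to V(G)$ in the order $z_1,\dots,z_n$, sending $\{z_1,z_2,z_3\}$ to the fixed blue triangle and maintaining three invariants: the current partial map is an induced embedding; every base pair both of whose vertices are placed has image an edge of blue degree $\ge\rho d$; and, in the style of Friedman--Pippenger, each not-yet-placed vertex carries a ``target set'' of admissible images of size $\ge\rho d/4$, which is intersected with $N(\phi(w))$ whenever an $F$-neighbour $w$ is placed and which, to accommodate a loose edge, is initialised inside the neighbourhood of a reserved high-degree vertex standing for the eventual image of the other endpoint. To place $z_j=v_{j-2}$: the edge $\{\phi(a_{j-2}),\phi(b_{j-2})\}$ has at least $\rho d$ blue extensions; from these I remove the extensions adjacent to $\phi(w)$ for some already-placed non-neighbour $w$ of $z_j$ --- at most $0.01d$ of them by the density hypothesis, applied to this edge and the set $S$ of those images (of size $<n$ and disjoint from $\{\phi(a_{j-2}),\phi(b_{j-2})\}$) --- the at most $n$ used vertices, and the (again at most $0.01d$) extensions that would spoil the second or third invariant for the new base pairs and target sets. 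By the structural fact every surviving extension is already adjacent to $\phi(a_{j-2}),\phi(b_{j-2})$, and the only further adjacencies $\phi(z_j)$ must have are to loose-neighbours, which are recorded in the target set of $z_j$; choosing $\phi(z_j)$ in the intersection of the surviving extensions with that target set keeps all three invariants. After $n$ steps $\phi$ gives an induced copy of $F$ with every triangle blue, so $G\strongarrow(F)^\Delta$.

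The step I expect to be the main obstacle is keeping the target sets of size $\ge\rho d/4$: a vertex of $F$ can be forced to be adjacent to $\Theta(n)$ already-embedded vertices through loose edges, and one must arrange the embedding so that these neighbourhood intersections bite only boundedly often before the vertex is placed, and so that what remains still meets the blue-extension set of size $\rho d$. This is exactly what the density hypothesis buys: the part of any common neighbourhood reachable from a bounded set of already-embedded vertices has size at most $0.01d$, negligible next to $\rho d$, so that at every step at least one admissible image survives all the cuts. Organising this bookkeeping and fixing the constants is the real work; the remainder is double counting together with the tight-tree recursion.
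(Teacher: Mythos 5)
The paper's proof and yours share the same high-level plan (tight-tree ordering, one-vertex-at-a-time greedy embedding, with the density hypothesis used to avoid unwanted adjacencies), but the key mechanism by which the paper guarantees a valid choice at each step is missing from your argument, and this is a genuine gap.

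The crucial step you do not supply is a reason why the blue-degree invariant on base pairs persists. When you place $z_j$ you pick a blue extension $z$ of $\{\phi(a_{j-2}),\phi(b_{j-2})\}$; the \emph{next} base pairs the process will need are $\{\phi(a_{j-2}),z\}$ and $\{\phi(b_{j-2}),z\}$, and you must ensure these again carry at least $\rho d$ blue triangles. You assert that at most $0.01d$ of the extensions ``spoil the second invariant,'' but the density hypothesis $|N(\{x,y\})\cap\Gamma(S)|\le 0.01d$ says nothing about blue degrees of new edges --- a priori \emph{all} $\rho d$ extensions could create base pairs of tiny blue degree. The paper sidesteps this entirely by passing, before embedding anything, to an edge-minimal subgraph $H\subseteq G$ still carrying its share of red triangles (at least $\tfrac{d}{6}e(H)$ of them); minimality forces \emph{every} edge of $H$ to have red degree at least $d/6$ in $H$. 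Then any red extension $z$ inside $H$ automatically produces new edges of $H$, so the degree invariant holds for free and no bookkeeping is needed. This ``clean-up to a min-degree subgraph'' step (a density-increment/minimality argument) is the missing idea; without it your invariant is unsupported.

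Your Friedman--Pippenger target-set layer also does not interact correctly with the given hypothesis. You use it to \emph{force} an adjacency $\phi(z_j)\in N(\phi(w))$ when $z_jw$ is a loose $F$-edge, but the hypothesis runs in the opposite direction: it guarantees that $N(\{\phi(a),\phi(b)\})\cap N(\phi(w))\subseteq N(\{\phi(a),\phi(b)\})\cap\Gamma(\{\phi(w)\})$ has size at most $0.01d$, i.e.\ it makes the set you are trying to land in \emph{small}, not large, and gives no lower bound on its blue part. So the target sets can die. (The structural descent observation you record is correct and is implicitly what the paper relies on when it requires $z\notin\Gamma(\psi(S))$; the paper's argument, like yours, is really only sound when every edge of $F$ lies in a triangle, so that the base pair accounts for \emph{all} required adjacencies of $v_{i+1}$ to already-placed vertices. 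Your target-set machinery is an attempt to go beyond that, but it does not go through with this hypothesis.) In short: adopt the minimality/min-degree subgraph step, drop the target sets, and choose each new vertex among the $\ge d/6$ red (or blue) extensions in $H$ that avoid $\Gamma(\psi(S))$; then $d/6-0.01d>0$ choices survive and the induction closes.
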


\begin{proof}
Let $\chi: \cK_3(G)\rightarrow \{\text{red}, \text{blue}\}$ be a $2$-coloring of the triangles of $G$. We may assume that the majority of the triangles are colored red. Let $\cR\subseteq \cK_3(G)$ be the set of red triangles. Then it holds that
\begin{align}\label{eq:initial}
    e(\cR)\geq \frac{e(\cK_3(G))}{2}=\frac{d \cdot e(G)}{6}
\end{align}
For a subgraph $G'\subseteq G$, let $\cR(G')=\cR\cap \cK_3(G')$ be the set of red triangles that are triangles of $G'$. Let $\Omega$ be the set of subgraphs $G'\subseteq G$ with the property that 
\begin{align}\label{eq:minimal}
    e(\cR(G'))\geq \frac{d\cdot e(G')}{6}
\end{align}
for every $G'\in \Omega$. Inequality $(\ref{eq:initial})$ implies that $G\in \Omega$ and consequently $\Omega\neq \emptyset$. Let $H \in \Omega$ be the graph with the smallest number of edges in $\Omega$. We claim that for every edge $e\in H$ it holds that
\begin{align}\label{eq:mindeg}
    d_{\cR(H)}(e)\geq \frac{d}{6}.
\end{align}
Suppose to the contrary that there exists edge $e \in H$ such that (\ref{eq:mindeg}) does not hold. Then the graph $H'=H\setminus \{e\}$ has
\begin{align*}
e(\cR(H'))=e(\cR(H))-d_{\cR(H)}(e)\geq \frac{d\cdot (e(H)-1)}{6}=\frac{d\cdot e(H')}{6}
\end{align*}
red triangles, satisfying property (\ref{eq:minimal}). This contradicts the minimality of $H$.
\\[0.3cm]
Let $F$ be a graph on $n$ vertices such that $\cK_3(F)$ is a tight tree. Let $e_1,\ldots,e_{n-2}$ be the ordering of the edges such that for each $i\geq 2$, $e_i$ has a vertex $v_i$ that does not belong to any previous edge and $e_i\setminus \{v_i\}$ is contained in $e_j$ for some $j<i$. We will construct an embedding $\psi:V(F)\rightarrow V(G)$ sequentially such that each triangle of $F$ is colored red. We start by taking any red triangle in $H$ and embedding the first three vertices of $F$ in any arbitrary order. Suppose that we already have embedded the triangles $e_1,\ldots, e_i$ and we want to embed $e_{i+1}$. Let $\{x,y\}=e_{i+1}\setminus {v_{i+1}}$ and $S=V\left(\left(\bigcup_{j=1}^i e_i\right)\setminus \{x,y\}\right)$.To embed $v_{i+1}$, it suffices to find a vertex $z$ such that $\{\psi(x),\psi(y),z\}$ forms a red triangle and $z$ is not adjacent to every element in $\psi(S)$, i.e., $z\notin \Gamma(\psi(S))$. Since $|S|\leq n$, by the hypothesis of the statement we have that
\begin{align*}
\left|N(\{\psi(x),\psi(y)\}) \cap \Gamma(\psi(S)) \right|\leq 0.01d.
\end{align*}
Moreover, relation (\ref{eq:mindeg}) says that there are at least $d/6$ red triangles in $H$ sharing the edge $\{\psi(x),\psi(y)\}$. Therefore, there are $d/6-0.01d>0$ choices of vertex $z$ to play the role of $v_{i+1}$ and we can take $\psi(v_{i+1})=z$. This concludes the proof of the lemma. 
\end{proof}

We prove the proposition by finding a suitable graph $G$ satisfying the properties of Lemma \ref{lem:beck}.

\begin{proof}[Proof of Proposition \ref{prop:tree}]
Let $N=n^4$ and $p=\frac{1}{200n}$ and $\epsilon>0$ a sufficiently small real number. First, we observe that whp the random graph $G_{N,p}$ satisfies the following two events:
\begin{enumerate}
    \item[(A)] For every pair of vertices $\{x,y\}$, we have
    \begin{align*}
        |N(\{x,y\})|=(1\pm \epsilon)p^2N.
    \end{align*}
    \item[(B)] For every set $S$ with $|S|\geq n$ and pair $\{x,y\}$ disjoint of $S$, we have
    \begin{align*}
        |N(\{x,y\})\setminus \Gamma(S)|=(1\pm \epsilon)p^2(1-p)^{|S|}N.
    \end{align*}
\end{enumerate}
Note that by Chernoff bound
\begin{align*}
\PP\left(\left||N(\{x,y\}|-p^2N\right|\geq \epsilon p^2N\right)\leq \exp(-c_{\epsilon}p^2N).
\end{align*}
Since, there are at most $N^2$ pairs $\{x,y\}$ a union bound argument shows that the probability that event (A) fails is at most
\begin{align*}
    N^2\exp(-c_{\epsilon}p^2N)\leq n^8\exp(-c'_{\epsilon}n^2)
\end{align*}
Similarly, Chernoff bound and a union bound argument gives that the probability that event (B) fails is at most
\begin{align*}
    \binom{N}{|S|}N^2\exp(-c_{\epsilon}p^2(1-p)^{|S|}N)\leq n^{4(n+2)}\exp(-c'_{\epsilon}n^2).
\end{align*}
Therefore, the probability that one of the events (A) and (B) fails is goes to $0$ as $n\rightarrow \infty$.

Take an instance of $G\sim G_{N,p}$ satisfying events (A) and (B). Then the average triangle degree is given by
\begin{align*}
    d=\ovl{d_{\cK_3(G)}}\geq (1-\epsilon)p^2N
\end{align*}
Moreover, event (A) and (B) gives to us that for set $|S|\leq n$ and pair $\{x,y\}$, we have
\begin{align*}
    |N(\{x,y\})\cap \Gamma(S)|&=|N(\{x,y\})|-|N(\{x,y\})\setminus \Gamma(S)|\\
    &\leq (1+\epsilon)p^2N-(1-\epsilon)p^2(1-p)^{|S|}N\\
    &\leq (1+\epsilon)p^2N-(1-\epsilon)p^2(1-p|S|)N\\
    &\leq (0.005+2\epsilon)p^2N\leq 0.01 d.
\end{align*}
Hence, $G$ satisfies the hypothesis of Lemma \ref{lem:beck} and consequently $G\strongarrow (F)^{\Delta}$ for any $F$ as in the statement. This concludes the proof.
\end{proof}

\section{Concluding remarks}

We would like to remark that the original proofs in \cites{Deuber75, NR75} work for a more general setting where one can color complete graphs of size $k$ instead of triangles. To be more precise, given graphs $G$ and $F$, we say that $G\strongarrow (F)^{K_r}$ if any $2$-coloring of the complete graphs of size $r$ in $G$ yiels a induced copy of $F$ with all $K_r$ of the same colors. Let $R_{\Ind}^{K_r}(F)$ be the smallest number of vertices in a graph $G$ such that $G\strongarrow (F)^{K_r}$. We note that all the proofs in this paper can be extended to $r\geq 3$, however for simplicity we preferred to keep it for triangles. We believe that the numbers $R_{\Ind}^{K_r}(F)$ should grow as the classical Ramsey numbers $R^{(r)}(F)$. Let $t_k$ be the tower function defined by $t_0(x)=x$ and $t_i(x)=2^{t_{i-1}(x)}$.

\begin{conj}
Show that for any graph $F$ on $n$ vertices, we have
\begin{align*}
    R_{\Ind}^{K_r}(F)\leq t_{r-1}(cn)
\end{align*}
for some constant $c:=c(r)$ depending on $r$
\end{conj}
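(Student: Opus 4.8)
The plan is to generalize, in a single sweep, the three techniques of the paper --- the random-graph Erd\H{o}s--Rado argument, the weak-regularity reduction, and the tree argument --- but none of them alone gives the full conjecture; what they do give is a template. The natural first step is to reduce the conjecture to an analogue of Lemma \ref{lem: PropPRamsey} for $K_r$-colorings: show that if $H$ is a graph on $M$ vertices satisfying a quasirandomness property $\ccP_r$ (all large enough subsets have density close to $1/2$, with error controlled by $M$), then $H\to (K_n)^{K_r}_2$, where now the colored objects are the $r$-cliques. Running the Erd\H{o}s--Rado stepping-up inside $H$: one builds a clique $x_1,\dots,x_\l$ such that the color of any $K_r$ on $\{x_{i_1},\dots,x_{i_r}\}$ with $i_1<\dots<i_r$ depends only on the first $r-1$ indices; this requires $\l$ large enough that an $(r-1)$-uniform Ramsey argument on $[\l]$ yields a monochromatic $K_{n-1}^{(r-1)}$, hence $\l \geq R^{(r-1)}_2(n-1)+1$, which is of tower height $r-2$ in $n$. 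At each of the $\l$ rounds one passes to a subset of relative size at least roughly $2^{-\binom{r-1}{\text{something}}}$ times a polynomial loss, so to keep the surviving sets above $\sqrt M$ one needs $\log M \gtrsim \l^{r-1}$, i.e. $M = t_{r-1}(cn)$ for a suitable $c=c(r)$. This already produces a host graph $G$ of size $t_{r-1}(cn)$ with $G\to(K_n)^{K_r}_2$ and, by the same Markov/union-bound packaging as in Lemma \ref{lem: RamseyGraph}, with many monochromatic $K_n$'s and with all large pairs $(\eps,1/2)$-regular.

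The second step is to pass from ``many monochromatic $K_n$'s in a pseudorandom $G$'' to an \emph{induced} monochromatic-$K_r$ copy of $F$. Here I would follow Section \ref{sec:main} almost verbatim but replace triangles by $r$-cliques: partition $V(G)$ into $n$ parts, pass to an $n$-partite sub-hypergraph $H_n$ of monochromatic (say blue) $K_n$'s of positive density, weakly regularize via Lemma \ref{lem: pengrucinski} to get sets $U_1,\dots,U_n$ with $H_n[U_1,\dots,U_n]$ $(\eps,d)$-dense, and then define the $r$-uniform $n$-partite hypergraph $BK^{(r)}(H_n)$ of blue $K_r$'s sitting inside the surviving $K_n$'s; the same double-counting as around \eqref{eqn: trianglesub} shows $BK^{(r)}(H_n)$ is $(\eps,d)$-dense on every $r$-tuple of parts. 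The only genuinely new ingredient is an embedding lemma (the $r$-uniform analogue of Lemma \ref{lem: embedding}) that takes an $n$-partite graph $G$ with all pairs $(\eps,1/2)$-regular and an $n$-partite $r$-graph $H\subseteq \binom{V(G)}{r}_{K_r}$ with all $r$-tuples of parts $(\eps,d)$-dense, and finds an induced copy of $F$ all of whose $K_r$'s lie in $H$ --- this should follow from the KNR-type counting argument in \cite{KNR10}, but to make it go through one must restrict $F$ so that $\mathcal{K}_r(F)$, the $r$-graph of $r$-cliques of $F$, is linear (or at least has bounded codegree). So as stated, with no hypothesis on $F$, this route proves the conjecture only for the subclass of $F$ with $\mathcal{K}_r(F)$ linear; the full statement for all $F$ is exactly the open part.

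Thus what I would actually write up is a proof of the conjecture \emph{under the additional assumption that} $\mathcal{K}_r(F)$ is linear, together with a remark that for general $F$ one can still get a bound of tower height $r$ (not $r-1$) by using the hypergraph induced Ramsey machinery of \cite{CDFRS17} on $\mathcal{K}_r(F)$ and then the packaging trick to upgrade ``strongly induced'' to the $K_r$-coloring statement: concretely, take $H^{(r)}$ with $H^{(r)}\strongarrow \mathcal{K}_r(F)$ of size $2^{2^{\dots}}$ ($r-1$ twos for $r$-graphs, i.e. tower height $r$), and then blow it up and add a ``witness layer'' $U$ exactly as in the proof of Theorem \ref{thm:bipartite}, where each vertex of $U$ certifies a particular transversal copy of $H^{(r)}$, converting the $K_r$-coloring of $G$ into an edge-coloring of those copies. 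The main obstacle, and the reason the conjecture stays open in general, is precisely the embedding step: with only weak (cut-type) regularity available for the monochromatic-clique hypergraph, one cannot embed an $F$ whose clique hypergraph $\mathcal{K}_r(F)$ contains dense substructures without paying an extra exponential, which pushes the height from $r-1$ up to $r$. Closing that gap would require either a regularity method strong enough to count non-linear configurations at this density, or a fundamentally different construction; I would flag this explicitly rather than attempt to grind past it.
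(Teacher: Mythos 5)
This statement appears in the paper only as an open conjecture in the concluding remarks --- the paper does not prove it, and in fact its own Theorem~\ref{thm: main} (the $r=3$, linear case) only achieves $t_3(40n)$, one exponential \emph{worse} than the conjectured $t_2(cn)$. You correctly recognize the statement is open and do not claim a full proof; your diagnosis that the embedding step is the bottleneck is also essentially the authors' own reading of the situation.

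There is, however, a concrete miscount in the part you do propose to prove. You assert that the generalized Erd\H{o}s--Rado step "already produces a host graph $G$ of size $t_{r-1}(cn)$" satisfying property~\ref{prop: ramseymany} and the regularity property~\ref{prop: epsregGnp}. That size $t_{r-1}(cn)$ is the analogue of the auxiliary window $m$ in Lemma~\ref{lem: RamseyGraph}, not the final host size $N$. The regularity one needs in property~\ref{prop: epsregGnp} must be at a scale $\eps < d^{n^2}$ dictated by the embedding lemma, where $d$ is (roughly) $m^{-n}$, so $\eps$ is already of tower height $r-1$ in its reciprocal; keeping the sets $U_i$ from Lemma~\ref{lem: pengrucinski} above $N^{0.9}$ then forces $\log N \gtrsim \eps^{-n}\ln(1/\eps)\ln(1/d)$, i.e., $N$ one exponential beyond $m$. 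This is exactly why the paper obtains $N = t_3(40n)$ rather than $t_2(cn)$ for $r=3$. So even restricting to $\cK_r(F)$ linear, your route gives $t_r(cn)$, not $t_{r-1}(cn)$, and thus does not establish the conjectured bound for the subclass either. Closing this gap needs a weak-regularity (or alternative) reduction whose loss is polynomial rather than exponential in $1/\eps$ --- that, not just the non-linear embedding, is an obstruction.

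Your fallback --- taking $H^{(r)} \strongarrow \cK_r(F)$ from \cite{CDFRS17} and adding a witness layer as in Section~\ref{sec:bipartite} --- also needs more care than you indicate. As emphasized in the paper's introduction, $G \strongarrow (F)^{K_r}$ requires a \emph{strongly induced} monochromatic copy of $\cK_r(F)$ inside $\cK_r(G)$, not merely an induced one, and it is not established that the hypergraphs from \cite{CDFRS17} provide this. The witness-layer construction of Section~\ref{sec:bipartite} does repair the issue, but only because every triangle of $F \in \cB_n$ uses a vertex of $B$; for general $F$, $K_r$'s can sit entirely inside the blow-up of $H^{(r)}$, and the witness layer gives no control over them. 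So the $t_r$ fallback is plausible but is not a corollary of the cited results; it is its own argument. Given all this, what you can honestly claim is: the template gives $t_r(cn)$ when $\cK_r(F)$ is linear (matching and mildly extending the paper), and the conjectured $t_{r-1}(cn)$ --- even for the linear subclass, and a fortiori in general --- remains open.
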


\bibliography{ref}
\appendix\label{sec:appendix}

\section{Proof of Lemma \ref{lem: Gnpisregular}}
\begin{proof}
Let $G(t,1/2)$ be the random graph with vertex set $V$ of size $t$. We first show that for every pair of subsets of size $X$ and $Y$ such that $X$ and $Y$ are disjoint and $|X|= |Y|=\sqrt{t}$, we have,
    \begin{align*}
            \left|e(X,Y) - \frac{1}{2}P(X,Y)\right|< t^{-1/8}P(X,Y). 
    \end{align*}   
    Let $X$ and $Y$ be subsets of $V$ such that $|X|=|Y|=\sqrt{t}$. Then we have, 
    \begin{align*}
        \mu:=\EE[e(X,Y)] = \frac{1}{2}P(X,Y)\geq \frac{1}{2}{\binom{\sqrt{t}}{2}}.
    \end{align*}
    Let $t>2^{64}$ and  $\eps = t^{-1/8}$. By Chernoff Bound, we have, 
    \begin{align*}
        \PP(|e(X,Y)-\mu|>\eps\mu) &< 2\exp\left(-\frac{\eps^2\mu}{3}\right)
        < \exp\left(-\frac{t^{3/4}}{24}\right).
    \end{align*}
    Thus the probability that there exists a pair of subsets $X,Y$ with size $\sqrt{t}$ and, 
    \begin{align*}
        \left|e(X,Y) - \frac{1}{2}P(X,Y)\right|> t^{-1/8}P(X,Y). 
    \end{align*}
    is at most,
    \begin{align*}
        \displaystyle \sum_{\substack{Y\subseteq V\\ |Y|=\sqrt{t}}}\sum_{\substack{X\subseteq V\\ |X|=\sqrt{t}}}  \exp\left(-\frac{t^{3/4}}{24}\right) & =  {\binom{t} {\sqrt{t}}}^2 \exp\left(-\frac{t^{3/4}}{24}\right)< 1/4,
    \end{align*}
    for sufficiently large $t$ (in particular $t>2^{64})$. \\[0.3cm ]
    Consequently, $G(t,1/2)$ satisfies the property that for every pair of subsets of size $X$ and $Y$ such that $|X|= |Y|=\sqrt{t}$, 
    \begin{align*}
          \left|e(X,Y) - \frac{1}{2}P(X,Y)\right|< t^{-1/8}P(X,Y). 
    \end{align*} 
    This implies that given a pair subsets of vertices $X$ and $Y$ such that $|X|\geq \sqrt{t}$ and $|Y|\geq \sqrt{t}$, we must have,
    \begin{align*}
            \left|e(X,Y) - \frac{1}{2}P(X,Y)\right|> t^{-1/8}P(X,Y). 
    \end{align*} 
    since otherwise there exists $X_0\subseteq X$ and $Y_0\subseteq X$ with $|X_0|=|Y_0|=\sqrt{t}$ such that 
     \begin{align*}
           \left|e(X_0,Y_0) - \frac{1}{2}P(X_0,Y_0)\right|> t^{-1/8}P(X_0,Y_0). 
    \end{align*} 
\end{proof}

    \section{Proof of Lemma \ref{lem: pengrucinski}}  Before we formally prove the statement, we informally describe the proof here. The proof is based on a density increment argument.  Let $0<\eps < 1$ and $0< d\leq 1$ and let $H$ be an $r$-partite $r$-uniform hypergraph with pairwise disjoint vertex sets $V_1,\dots, V_r$ such that $d_H(V_1,\dots, V_r)\geq d.$\\[0.3cm]
    At each step $s$, we start with subsets $U_1^{(s)},\dots, U_r^{(s)}$ of $V_1,\dots, V_r$ respectively such that $d_H(U_1^{(s)},\dots, U_r^{(s)}) = d_s$. If $U_i^{(s)},\dots, U_r^{(s)}$ is not $(\eps,d/2)$-dense, then we show that for each $i\in [r]$, there exists  $U_i^{(s+1)}\subseteq U_i^{(s)}$ with $|U_i^{(s+1)}|\geq  \eps|U_i^{(s)}|$ , such that the density of $d_H(U_1^{(s+1)},\dots, U_r^{(s+1)})$ increases. In particular, we will show that, 
    $$d_{s+1} = d_H(U_1^{(s+1)},\dots, U_r^{(s+1)})\geq d_s\left(1+\frac{\eps^r}{2}\right).$$
    Since the density is bounded above by 1, the process must stop after a certain number of steps. We now prove the lemma. 
    \begin{proof}[Proof of Lemma \ref{lem: pengrucinski}]
    Fix $0<\eps < 1$ and $0< d\leq 1$. Let $H$ be an $r$-partite $r$-uniform hypergraph with pairwise disjoint vertex sets $V_1,\dots, V_r$ and let,
    \begin{align*}
        d_H(V_1,\dots, V_r) \geq d. 
    \end{align*}
    Assume that $(V_1,\dots, V_r)$ is not $(\eps,d/2)$-dense with respect to $H$ for otherwise we are done. Then there must exist subsets $U_1,\dots, U_r$ such that $U_i\subseteq V_i$ and $|U_i|\geq \eps |V_i|$ and,
    \begin{align*}
        d_H(U_1,\dots, U_r) < \frac{d}{2}.
    \end{align*}
    Let us partition $V_i$ using $U_i$. Let,
    \begin{align*}
        U_i(0) := U_i \quad \text{and} \quad  U_i(1) := V_i \setminus U_i.
    \end{align*}
    We will use $\mathbf{v}$ to denote vectors in $\{0,1\}^r$ and in particular let $\mathbf{0} = (0,\dots, 0).$ Further, we denote the components of a vector $\mathbf{v}$ by $(v_1,\dots, v_r)$. \\[0.3cm]
   Assume that for every $\mathbf{v}\in \{0,1\}^r\setminus\{\mathbf{0}\}$, 
    \begin{align*}
        d_H(U_1{(v_1)},\dots, U_r{(v_r)})< d\left(1+\frac{\eps^r}{2}\right).
    \end{align*}
    Then in view of the fact that $|U_i|\geq \eps|V_i|$ and $d_H(U_1,\dots, U_r)<d/2$, on counting the number of edges in $H[V_1,\dots,V_r]$ over all the parts $U_1{(v_1)},\dots, U_r{(v_r)}$, we have, 
    \begin{align*}
         e_H(V_1,\dots, V_r) &=\sum_{\mathbf{v}\in\{0,1\}^r}  d_H(U_1{(v_1)},\dots, U_r{(v_r)}) |U_1{(v_1)}|\cdots |U_r{(v_r)}|\\
         &< d\left(1+\frac{\eps^r}{2}\right)\left(\sum_{\mathbf{v}\in\{0,1\}^r\setminus\{\mathbf{0}\}}|U_1{(v_1)}|\cdots |U_r{(v_r)}|\right) + \frac{d}{2}|U_1{(0)}|\cdots |U_r{(0)}|\\
         &=d\left(1+\frac{\eps^r}{2}\right)\left(\sum_{\mathbf{v}\in\{0,1\}^r\setminus\{\mathbf{0}\}}|U_1{(v_1)}|\cdots |U_r{(v_r)}|\right) + \left(d-\frac{d}{2}\right)|U_1|\cdots |U_r|\\
         &< d|V_1|\cdots|V_r| + \eps^r\frac{d}{2}\left(1 - \frac{|U_1|\cdots |U_r|}{\eps^r|V_1|\cdots|V_r|}\right)|V_1|\cdots|V_r|\\ 
         &< d|V_1|\cdots|V_r|,
    \end{align*}
    which contradicts the assumption that $d_H(V_1,\dots,V_r)\geq d$. Thus, there exists some $\mathbf{w}\in \{0,1\}^r\setminus\{\mathbf{0}\}$, such that,
    \begin{align*}
        d_1:= d_H(U_1{(w_1)},\dots, U_r{(w_r)})\geq  d\left(1+\frac{\eps^r}{2}\right).
    \end{align*}
    Let these subsets corresponding to $\mathbf{w}$ be the first subset of vertices, i.e., 
    \begin{align*}
        U_1^{(1)}:= U_1(w_1), \dots, U_r^{(1)}:= U_r(w_r), \text{ and we have }  |U_i^{(1)}|\geq \eps|V_i| \text{ for every } i\in [r].  
    \end{align*}
    If $U_1^{(1)},\dots, U_r^{(1)}$ is not $(\eps,d/2)$-dense with respect to $H$, we repeat the above step with $U_1^{(1)},\dots, U_r^{(1)}$ playing the role of $V_1,\dots, V_r$. Continuing for $s$ steps, we obtain pairwise disjoint subsets, 
     \begin{align}
     \label{eqn: sizeofUi}
        U_1^{(s)}, \dots, U_r^{(s)} \text{ such that }  |U_i^{(s)}|\geq \eps|U_i^{(s-1)}| \text{ for every } i\in [r].  
    \end{align}
    Further, we have an increase in density, i.e., 
    \begin{align*}
        d_{s}:= d_H(U_1^{(s)}, \dots, U_r^{(s)}) \geq d\left(1+\frac{\eps^r}{2}\right)^{s}. 
    \end{align*}
    Since $d_s<1$, the process must stop before $s=s_0$ steps, where
    \begin{align*}
        s_0 \leq \frac{\ln\left(\frac{1}{d}\right)}{\ln\left(1+\frac{\eps^r}{2}\right)}.
    \end{align*}
    Using the inequality, 
    \begin{align*}
     x/2 \leq \ln(1+x) \text{ for } 0\leq x<1,
    \end{align*}
    we have,
     \begin{align}
    \label{eqn: s_0}
        s_0 \leq \frac{\ln\left(\frac{1}{d}\right)}{\ln\left(1+\frac{\eps^r}{2}\right)} \leq \frac{4\ln(\frac{1}{d})}{\eps^r}.
    \end{align}
    Thus the sets $U_1^{(s_0)},\dots, U_r^{(s_0)}$ are $(\eps,d/2)$-dense with respect to $H$ and further in view of (\ref{eqn: sizeofUi}), 
    \begin{align*}
        |U_i^{(s_0)}|\geq \eps^{s_0}|V_i| \text{ for every } i\in [r].  
    \end{align*}
    In view of the upper bound on $s_0$ in (\ref{eqn: s_0}), we have
    \begin{align*}
        |U_i^{(s_0)}|\geq \eps^{s_0}|V_i| \geq d^{\frac{4}{\eps^r}\ln(1/\eps)}|V_i| \text{ for every } i\in [r]. 
    \end{align*}
    \end{proof}

\section{Proof of Lemma \ref{lem: embedding}}

In this section we prove a counting version of Lemma \ref{lem: embedding}. Our proof is an adaptation of the argument in \cite{KNR10}. One of the main tools for our proof is the well-known graph counting lemma (see \cite{KSSS02}).

\begin{lemma}[Graph counting lemma]\label{lem:graphcounting}
Let $\ell\geq 1$ be an integer and $0<d, \epsilon<1$ real numbers satisfying $\epsilon^{1/\ell}<d<1-\epsilon^{1/\ell}$. Then the following holds for every graph $F$ on vertex set $[\ell]$. If $G$ is an $\ell$-partite graph with vertex partition $V(G)=V_1\cup\ldots \cup V_\ell$ such that $(V_i,V_j)$ is $(\epsilon,d)$-regular for every $1\leq i,j \leq \ell$, then the number of transversal induced copies of $F$ is
\begin{align*}
    (1\pm \epsilon)d^{e(F)}(1-d)^{\binom{\ell}{2}-e(F)}\prod_{i=1}^{\ell}|V_i|.
\end{align*}
\end{lemma}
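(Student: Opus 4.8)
The plan is to prove Lemma~\ref{lem:graphcounting} by the standard greedy ``telescoping'' argument behind counting lemmas, which is also the argument underlying the version in \cite{KSSS02}. Since the parts $V_1,\dots,V_\ell$ are labelled and $F$ has vertex set $[\ell]$, a transversal induced copy of $F$ is the same thing as a tuple $(v_1,\dots,v_\ell)\in V_1\times\cdots\times V_\ell$ with $\{v_i,v_j\}\in E(G)\iff\{i,j\}\in E(F)$ for all $i<j$, so it suffices to count such tuples. Write $\mu_{ij}=d$ if $\{i,j\}\in E(F)$ and $\mu_{ij}=1-d$ otherwise; the target is $(1\pm\epsilon)\prod_{i<j}\mu_{ij}\prod_i|V_i|$, where $\prod_{i<j}\mu_{ij}=d^{e(F)}(1-d)^{\binom\ell2-e(F)}$. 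For a partial tuple $\mathbf v=(v_1,\dots,v_t)$ and $k>t$ let $C_k(\mathbf v)=\{w\in V_k:\{v_i,w\}\in E(G)\text{ iff }\{i,k\}\in E(F)\text{ for all }i\le t\}$ be the set of legal choices for the $k$-th vertex given $\mathbf v$, with $C_k(\emptyset)=V_k$; then the number of legal one-vertex extensions of $\mathbf v$ is $|C_{t+1}(\mathbf v)|$, and iterating organises the whole count around the sizes of the candidate sets $C_k(\mathbf v)$.

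The one tool needed is a one-step estimate from regularity: if $(X,Y)$ is $(\epsilon,d)$-regular and $B\subseteq Y$ with $|B|\ge\epsilon|Y|$, then all but at most $2\epsilon|X|$ vertices $x\in X$ satisfy $|N(x)\cap B|=(d\pm\epsilon)|B|$ — otherwise the set of violators in one direction would, together with $B$, witness a density deviation contradicting $(\epsilon,d)$-regularity. Passing to the bipartite complement, which is $(\epsilon,1-d)$-regular, gives the analogous statement for non-neighbourhoods, so both edges and non-edges of $F$ are handled uniformly. With this I would induct on $t=0,1,\dots,\ell$, maintaining that $N_t$, the number of legal length-$t$ tuples, equals $(1\pm\eta_t)\prod_{i<j\le t}\mu_{ij}\prod_{i\le t}|V_i|$, and that all but a set $\mathcal B_t$ of legal length-$t$ tuples are \emph{good}, meaning $|C_k(\mathbf v)|=(1\pm\eta_t)\prod_{i\le t}\mu_{ik}\,|V_k|$ for every $k>t$ (here $\eta_0=0$ and $\eta_t$ is a slowly growing error parameter). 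The hypothesis $\epsilon^{1/\ell}<d<1-\epsilon^{1/\ell}$ enters precisely here: it forces $\prod_{i\le t}\mu_{ik}>\epsilon^{t/\ell}\ge\epsilon^{(\ell-1)/\ell}$, so for a good $\mathbf v$ one always has $|C_k(\mathbf v)|\ge\epsilon|V_k|$ and the one-step estimate remains applicable at the next level. The induction step is then mechanical: for each good $\mathbf v$ of length $t$, apply the one-step estimate with $X=V_{t+1}$, $Y=V_k$, $B=C_k(\mathbf v)$ for each $k>t+1$ to see that all but $\le2\ell\epsilon|V_{t+1}|$ legal choices of $v_{t+1}\in C_{t+1}(\mathbf v)$ keep every $C_k$ on target; put the remaining extensions, together with all extensions of tuples in $\mathcal B_t$, into $\mathcal B_{t+1}$; and compute $N_{t+1}=\sum_{\mathbf v}|C_{t+1}(\mathbf v)|$ by splitting the sum over good and bad $\mathbf v$. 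Taking $t=\ell$ yields the claimed count.

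The step I expect to be the main obstacle is the error bookkeeping — fixing the sequence $\eta_t$ and verifying that $\mathcal B_t$ stays negligible relative to $N_t$ at every level. Passing from level $t$ to $t+1$ multiplies the relative error by a factor $1+O(\epsilon/\min(d,1-d))=1+O(\epsilon^{(\ell-1)/\ell})$ and contributes an additive error of order $\epsilon$ times a product of part sizes coming from the bad set, and one must check that these stay small against the target $\prod_{i<j}\mu_{ij}\prod_i|V_i|$ over all $\ell$ levels; this is exactly what the separation $\epsilon^{1/\ell}<d<1-\epsilon^{1/\ell}$, together with $\epsilon$ being small relative to $\ell$, provides. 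The remaining ingredients — the one-step estimate, the complement trick for non-edges, and the union bounds over the fewer than $\ell$ relevant indices $k$ — are routine.
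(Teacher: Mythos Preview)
The paper does not prove Lemma~\ref{lem:graphcounting} at all: it is quoted as the ``well-known graph counting lemma'' with a reference to \cite{KSSS02}, and is then used as a black box in the proof of the subsequent embedding lemma. So there is no paper proof to compare against.

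Your sketch is the standard vertex-by-vertex telescoping argument, and it is the right one. The identification of transversal induced copies with tuples, the one-step regularity estimate for typical neighbourhood sizes, the complement trick to handle non-edges via $(\epsilon,1-d)$-regularity, and the inductive maintenance of candidate sets $C_k(\mathbf v)$ are exactly how such lemmas are proved. You have also correctly isolated the role of the hypothesis $\epsilon^{1/\ell}<d<1-\epsilon^{1/\ell}$: it guarantees $\prod_{i\le t}\mu_{ik}>\epsilon$, so candidate sets never fall below the regularity threshold. The only place where care is needed, and you flag it yourself, is the final error accounting: $\ell$ rounds of multiplicative error $1+O(\epsilon/\min(d,1-d))$ plus the additive contribution of the bad sets must be shown to stay within the claimed $1\pm\epsilon$ window rather than merely $1\pm C_\ell\,\epsilon^{1-1/\ell}$. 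For the purposes of this paper the precise constant in the error is immaterial (only the case $\ell=0$ of the induction in the next lemma uses it, and any $o(1)$ error suffices there), so your plan is adequate.
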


We now state and proof the counting version of Lemma \ref{lem: embedding}.

\begin{lemma}
Let $n\geq 1$, $0<d<1$ and $0<\epsilon<d^{n^2}$. Given a graph $F$ and a linear $3$-graph $\cT$ with $\cT\subseteq \cK_3(F)$ on the same set of $n$ vertices, the following holds. For all $n$-partite graphs $G$ and $n$-partite 3-uniform hypergraphs $\cH$ with vertex set $V_1\cup\ldots\cup V_n$, $|V_i|> \epsilon^{-1}$, satisfying
\begin{enumerate}
    \item[(1)] $G$ is an underlying graph of $\cH$, i.e., $\cH\subseteq \cK_3(G)$.
    \item[(2)] For all $1\leq i < j\leq n$, the pairs $(V_i,V_j)$ are $(\eps,1/2)$-regular in $G$. 
    \item[(3)] For all $1\leq i < j < k\leq n$ are $(\eps, d)$-dense in $\cH$, 
\end{enumerate}
there exists at least
\begin{align*}
(1-\epsilon)\left(\frac{d}{2}\right)^{e(\cT)}\left(\frac{1}{2}\right)^{\binom{n}{2}}\prod_{i=1}^n |V_i|
\end{align*}
transversal copies of the system $(F,\cT)$ such that $F$ is induced in $G$ and $\cT\subseteq \cH$.
\end{lemma}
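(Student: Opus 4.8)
The plan is to prove the counting statement by a two-level embedding argument: first expose a "dense skeleton" inside each triple $(V_i,V_j,V_k)$ with $\{i,j,k\}\in\cT$, and then apply the graph counting lemma (Lemma~\ref{lem:graphcounting}) to count transversal induced copies of $F$ that respect these skeletons. More precisely, I would enumerate the edges of the linear $3$-graph $\cT$ as $f_1,\dots,f_{e(\cT)}$ and process them one at a time, at step $t$ passing from subsets $V_i^{(t-1)}\subseteq V_i$ to subsets $V_i^{(t)}\subseteq V_i$. Because $\cT$ is \emph{linear}, the edge $f_t=\{i,j,k\}$ shares at most one vertex with the union of the earlier edges $f_1,\dots,f_{t-1}$; this is the structural fact that makes the whole scheme work, since it means that when we restrict $V_i,V_j,V_k$ to handle $f_t$, at most one of these three indices has already been touched, so the restrictions do not interfere with one another across different edges of $\cT$.

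The key step inside each round is the following: given that $(V_i^{(t-1)},V_j^{(t-1)},V_k^{(t-1)})$ is still $(\epsilon',d)$-dense in $\cH$ for a slightly inflated $\epsilon'$, and each part still has size at least (say) $\epsilon^{-1}$ times a power of $d$, I want to find $V_i^{(t)}\subseteq V_i^{(t-1)}$, $V_j^{(t)}\subseteq V_j^{(t-1)}$, $V_k^{(t)}\subseteq V_k^{(t-1)}$, each of size at least a $d$-fraction of the previous one, such that \emph{every} transversal triple in $V_i^{(t)}\times V_j^{(t)}\times V_k^{(t)}$ is an edge of $\cH$. This is a standard "dependent random choice / density to clique" move: since the relative density is at least $d$, a positive proportion of vertices $v\in V_k^{(t-1)}$ have codegree at least $\tfrac{d}{2}|V_i^{(t-1)}|\cdot|V_j^{(t-1)}|$ into $(V_i^{(t-1)},V_j^{(t-1)})$; iterating the density argument (essentially the mechanism already used in Lemma~\ref{lem: pengrucinski}, or a direct averaging) one extracts large $V_i^{(t)},V_j^{(t)}$ whose common neighborhood structure is complete. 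The only subtlety is bookkeeping: each index $i$ appears in at most two edges of $\cT$ incident to it but, thanks to linearity, at most one \emph{processed so far}, so each $V_i$ loses a factor of at most $d^{O(1)}$ in total, and since $\epsilon<d^{n^2}$ the parts stay far above $\epsilon^{-1}$ and the regularity parameter never degrades past the threshold $\epsilon^{1/n}<\tfrac12<1-\epsilon^{1/n}$ needed for Lemma~\ref{lem:graphcounting}.

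After all $e(\cT)$ rounds I am left with subsets $V_1',\dots,V_n'$ (where $V_i'=V_i$ if $i$ lies in no edge of $\cT$) with $|V_i'|\geq d^{2}|V_i|$ or so, such that for every $\{i,j,k\}\in\cT$ every transversal triple across $V_i',V_j',V_k'$ is an edge of $\cH$, and such that every pair $(V_i',V_j')$ is $(\epsilon^{1/2},1/2)$-regular in $G$ (regularity is inherited on subsets of size at least $\sqrt\epsilon$ times the original, up to replacing $\epsilon$ by $\sqrt\epsilon$; here I would track the precise constant to land inside the hypothesis of Lemma~\ref{lem:graphcounting}). Now I invoke Lemma~\ref{lem:graphcounting} with $\ell=n$, $d=1/2$, and the graph $F$ on the partition $(V_1',\dots,V_n')$: it gives
\begin{align*}
(1\pm\epsilon^{1/2})\left(\frac12\right)^{e(F)}\left(\frac12\right)^{\binom n2-e(F)}\prod_{i=1}^n|V_i'|
=(1\pm\epsilon^{1/2})\left(\frac12\right)^{\binom n2}\prod_{i=1}^n|V_i'|
\end{align*}
transversal induced copies of $F$ in $G$, and \emph{every} such copy automatically has all its triangles of $\cT$ inside $\cH$, because those triangles are transversal across triples $V_i',V_j',V_k'$ with $\{i,j,k\}\in\cT$, which we arranged to be complete in $\cH$. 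Plugging in $\prod|V_i'|\geq d^{e(\cT)}\prod|V_i|$ from the size losses (arranging the constants so that the product of per-round shrink factors is exactly $(d/2)^{e(\cT)}$ relative to $(1/2)^{\binom n2}\prod|V_i|$, or absorbing slack into the $(1-\epsilon)$ factor) yields the claimed lower bound of $(1-\epsilon)(d/2)^{e(\cT)}(1/2)^{\binom n2}\prod_i|V_i|$. The main obstacle I anticipate is purely quantitative: making the $\epsilon$-versus-$d$ accounting tight enough that after $O(e(\cT))\le O(n^2)$ successive restrictions the regularity parameter is still below the counting-lemma threshold and the size bound comes out with exactly the advertised constant $(d/2)^{e(\cT)}$ rather than something weaker — this is where the hypothesis $\epsilon<d^{n^2}$ is used and must be spent carefully.
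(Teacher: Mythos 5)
The central step of your plan does not work: from a tripartite $3$-graph $\cH$ with $(\eps,d)$-density alone one cannot extract subsets $V_i^{(t)},V_j^{(t)},V_k^{(t)}$ of \emph{linear} size (a $d$-fraction of the original parts) such that \emph{every} transversal triple across them is an edge of $\cH$. A random tripartite $3$-graph with constant edge probability $d$ contains, with overwhelming probability, no complete tripartite subhypergraph whose parts have more than $O(\log|V_i|)$ vertices; this is the hypergraph analogue of the Kővári--Sós--Turán bound, the very reason Theorem~\ref{thm:erdos64} cannot be improved beyond a logarithmic-sized $K_{t,\ldots,t}^{(k)}$ in dense hosts. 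Neither dependent random choice nor the density-increment mechanism of Lemma~\ref{lem: pengrucinski} produces completeness; Lemma~\ref{lem: pengrucinski} upgrades density to $(\eps,d/2)$-density, which is strictly weaker than what your plan requires. Without complete skeletons, your final invocation of the graph counting lemma no longer ``automatically'' delivers copies of $F$ whose triangles of $\cT$ land in $\cH$, and the argument collapses.

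There are also two smaller structural misreadings of linearity. First, linearity of $\cT$ only guarantees that $f_t$ shares at most one vertex with each individual earlier edge, not with the union $f_1\cup\cdots\cup f_{t-1}$; for instance $\cT=\{\{1,2,3\},\{1,4,5\},\{2,4,6\}\}$ is linear yet its third edge meets the union of the first two in two vertices. Second, a vertex of $\cT$ may lie in $\Theta(n)$ edges (think of a sunflower with a common apex), so your claim that each $V_i$ loses only a $d^{O(1)}$ factor across all rounds is false as well. The paper avoids all of this by not cleansing at all: it fixes $T_{\ell+1}=\{x_1,x_2,x_3\}$, fixes a partial copy $F^*$ on the other $n-3$ coordinates, observes that the set of valid extensions $(z_1,z_2,z_3)$ factors as a product $W_1\times W_2\times W_3$ (here linearity is what guarantees no two of $x_1,x_2,x_3$ appear together in an earlier triangle, so the $\cT_\ell$-constraints on $z_1,z_2,z_3$ decouple; the pairwise graph constraints among the $z_i$ are absorbed into the final indicator $\ind_{\cH}$, since $\cH\subseteq\cK_3(G)$), and then applies $(\eps,d)$-density to the large product sets $W_1\times W_2\times W_3$ to pick up a factor of $d$ per round. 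This counting-by-extension argument is what makes the $(d/2)^{e(\cT)}$ constant attainable; your scheme inherently cannot reach it.
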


\begin{proof}
Write $V(F)=\{x_1,\ldots,x_n\}$, $m=e(\cT)$ and $\cT=\{T_1,\ldots,T_m\}$, where $T_i$'s are triangles of $F$. Our goal is to embed a transversal copy of $(F,\cT)$, i.e., a copy such that the vertex $x_i$ is sent to a vertex in $V_i$ for $1\leq i \leq n$. We will do that by induction on the number of edges of $\cT$. For $1\leq \ell \leq m$, let $N_{\ell}$ be the number of copies of $(F,\cT_\ell)$ with $F$ induced in $G$, $\cT_\ell:=\{T_1,\ldots,T_{\ell}\}\subseteq \cH$. We claim that
\begin{align}\label{eq:copies}
    N_{\ell}\geq  (1-\epsilon)\left(\frac{d}{2}\right)^{\ell}\left(\frac{1}{2}\right)^{\binom{n}{2}}\prod_{i=1}^n |V_i|
\end{align}
The base case $\ell=0$ follows immediately by Lemma \ref{lem:graphcounting}. Suppose that $N_{\ell}$ satisfies (\ref{eq:copies}). We now want to embed the edge $T_{\ell+1}$ in $\cH$. 
\\[0.3cm]
We may assume without lost of generality that $T_{\ell+1}=\{x_1,x_2,x_3\}$. Let $(\tilde{F},\tilde{\cT_\ell})$ be a copy of $(F,\cT)$ with vertex set $V:=V(\tilde{F})=\{y_1,\ldots, y_n\}$ in $(G,\cH)$. Let $F^*=\tilde{F}[V\setminus\{y_1,y_2,y_3\}]$. We define the set 
\begin{align*}
    \Ext(F^*)=\{(z_1,z_2,z_3) \in V_1\times V_2 \times V_3:\: G[V(F^*)\cup\{z_1,z_2,z_3\}] \text{ is a copy of $F$}\\ \text{and $\cH[V(F^*)\cup\{z_1,z_2,z_3\}]$ is a copy of $\cT_\ell$}\}  
\end{align*}
as the possible extensions of $F^*$ to a copy of the system $(F,\cT_\ell)$. In particular, by definition, $(y_1,y_2,y_3)\in \Ext(F^*)$. With this notation, $N_{\ell+1}$ is given by
\begin{align*}
\sum_{F^*}\sum_{(z_1,z_2,z_3)\in \Ext(F^*)}\ind_{\cH}(z_1,z_2,z_3),
\end{align*}
where $\ind_{\cH}(z_1,z_2,z_3)$ is one if $\{z_1,z_2,z_3\}\in \cH$ and zero otherwise.
\\[0.3cm]
Let $W_1^{F^*}, W_2^{F^*}$ and $W_3^{F^*}$ be the set of possible values of $z_1, z_2$ and $z_3$ in $\Ext(F^*)$, respectively. Since $\cT_{\ell}$ is a linear $3$-graph, there are no edge dependencies between $z_i$'s and consequently $\Ext(F^*)=W_1\times W_2\times W_3$. Hence, 
\begin{align*}
    N_{\ell+1}=\sum_{F^*}e_{\cH}(W_1^{F^*},W_2^{F^*},W_3^{F^*}).
\end{align*}
We split the sum into two parts. Let 
\begin{align*}
    \cS&=\{F^*:\: |W_1^{F^*}||W_2^{F^*}||W_3^{F^*}|<\epsilon|V_1||V_2||V_3|\}\\
     \cL&=\{F^*:\: |W_1^{F^*}||W_2^{F^*}||W_3^{F^*}|\geq\epsilon|V_1||V_2||V_3|\}
\end{align*}
be the set of $F^*$ that $|W_1^{F^*}||W_2^{F^*}||W_3^{F^*}|$ is small and large, respectively. Note that since $(V_1,V_2,V_3)$ is $(\epsilon,d)$ dense, it follows that for $F^*\in \cL$
\begin{align}\label{eq:big1}
    e_{\cH}(W_1^{F^*},W_2^{F^*},W_3^{F^*})\geq d|W_1^{F^*}||W_2^{F^*}||W_3^{F^*}|.
\end{align}
Moreover, a computation shows that
\begin{align*}
    (1-\epsilon)\left(\frac{d}{2}\right)^{\ell}\left(\frac{1}{2}\right)^{\binom{n}{2}}\prod_{i=1}^n |V_i|&\leq N_{\ell}=\sum_{F^*}|W_1^{F^*}||W_2^{F^*}||W_3^{F^*}|\\
    &\leq\epsilon\prod_{i=1}^n |V_i|+\sum_{F^*\in \cL}|W_1^{F^*}||W_2^{F^*}||W_3^{F^*}|
\end{align*}
and since $\epsilon<d^{n^2}$ we obtain that 
\begin{align}\label{eq:big2}
    \sum_{F^*\in \cL}|W_1^{F^*}||W_2^{F^*}||W_3^{F^*}|\geq (1-\epsilon)\frac{1}{2}\left(\frac{d}{2}\right)^{\ell}\left(\frac{1}{2}\right)^{\binom{n}{2}}\prod_{i=1}^n |V_i|
\end{align}
Relations (\ref{eq:big1}) and (\ref{eq:big2}) gives us that
\begin{align*}
N_{\ell+1}&=\sum_{F^*}e_{\cH}(W_1^{F^*},W_2^{F^*},W_3^{F^*})\geq \sum_{F^*\in\cL}e_{\cH}(W_1^{F^*},W_2^{F^*},W_3^{F^*})\\
&\geq d \sum_{F^*\in \cL}|W_1^{F^*}||W_2^{F^*}||W_3^{F^*}|\geq (1-\epsilon)\left(\frac{d}{2}\right)^{\ell+1}\left(\frac{1}{2}\right)^{\binom{n}{2}}\prod_{i=1}^n |V_i|,
\end{align*}
which concludes the induction and the proof of the lemma.
\end{proof}
\end{document}